\xpatchcmd{\paragraph}{\normalfont}{{\normalfont\bfseries}}{}{}
\newcommand{\myspace}{\qquad\qquad\qquad}
\newcommand{\cA}{{\mathcal A}}
\newcommand{\cB}{{\mathcal B}}
\newcommand{\cD}{{\mathcal D}}
\newcommand{\cH}{{\mathcal H}}
\newcommand{\cI}{{\mathcal I}}
\newcommand{\cK}{{\mathcal K}}
\newcommand{\cL}{{\mathcal L}}
\newcommand{\cQ}{{\mathcal Q}}
\newcommand{\cU}{{\mathcal U}}
\newtheorem{theorem}{Theorem}[section]
\newtheorem{lemma}[theorem]{Lemma}
\newtheorem{proposition}[theorem]{Proposition}
\newtheorem{remark}[theorem]{Remark}
\newtheorem{assumptions}[theorem]{Assumptions}
\newtheorem{corollary}[theorem]{Corollary}
\newtheorem{problem}[theorem]{Problem}
\numberwithin{equation}{section}
\date{}
\begin{document}


\title[LQ control of parabolic-like evolutions with memory of the inputs]{Linear quadratic control of parabolic-like evolutions with memory of the inputs} 

\author{Paolo Acquistapace}
\address{Paolo Acquistapace, Universit\`a di Pisa ({\em Ret.}), 
Dipartimento di Matematica, Largo Bruno~Pontecorvo 5, 56127 Pisa, ITALY 
}
\email{paolo.acquistapace(at)unipi.it}

\author{Francesca Bucci}
\address{Francesca Bucci, Universit\`a degli Studi di Firenze,
Dipartimento di Matematica e Informatica,
Viale Morgagni 65, 50139 Firenze, ITALY
}
\email{francesca.bucci(at)unifi.it}

\subjclass[2020]{49N10, 35R09; 93C23, 49N35}

\keywords{evolution equations with memory, linear quadratic problem, unbounded control operator,
optimal synthesis, closed-loop control, Riccati equation}

\begin{abstract}
A study of the linear quadratic~(LQ)~control problem on a finite time interval for a model equation 
in Hilbert spaces which comprehends the memory of the inputs was performed recently by the authors.
The outcome included a closed-loop representation of the unique optimal control, 
along with the derivation of a related coupled system of three quadratic (operator) equations which is shown to be well-posed.
Notably, in the absence of memory the above elements -- namely, formula and system -- reduce to the known feedback formula and single differential Riccati equation, respectively.
In this work we take the next natural step, and prove the said results for a class
of evolutions where the control operator is no longer bounded.
These findings appear to be the first ones of their kind; furthermore, they extend the classical theory of the LQ problem and Riccati equations for~parabolic~partial differential equations.     
\end{abstract}

\maketitle

{\small

\hfill {\sl To Giuseppe (Beppe) Da Prato}

\hfill{\sl great scientist and friend}

\hfill{\sl with esteem and affection}

\hfill{\sl remembering past times and his sharing of the love of mathematics}

}


\section{Introduction and main result}
The question of attaining a full synthesis of the optimal solution in the finite time horizon optimal control problem with quadratic functionals for important classes of linear partial differential equations (PDE)
subject to boundary actions has been extensively studied in the last forty-five years or so.
Given that the unique ({\em open-loop}) minimizer does exist, the actual sought-after goal is to attain a representation of the optimal control in {\em closed-loop} form first of all, and then to attain -- possibly for suitable subclasses of functionals only -- that the (linear, bounded) operator that occurs in the feedback formula solves uniquely a Riccati equation.
Thus proving the well-posedness of appropriate Riccati equations is a crucial step in the study
of the linear quadratic (LQ)~problem for evolutionary PDE.
Theoretical findings and significant PDE illustrations are provided in \cite{las-trig-redbooks} dealing with parabolic-like and hyperbolic-like evolutions. The papers \cite{ac-bu-las_2005,ac-bu-las_2013,ac-bu-uniqueness_2023} develop a theory suited to deal with a class of control systems which encompasses
distinct coupled systems comprising both parabolic and hyperbolic PDE components
(such as e.g.~some which describe thermoelastic systems, acoustic-structure and fluid-structure interactions).

It is well known that diverse physical phenomena such as viscoelasticity or heat conduction as well as the evolution of population dynamics may bring about model equations where the presence of memory terms accounts for the influence of the past values of one or more variables in play.
Consider now a simple, albeit relevant, example such as a linear heat equation with finite memory
in a bounded domain $\Omega\subset\mathbb{R}^d$, supplemented with initial and boundary data, to wit, 
\begin{equation*}
\begin{cases}
w_t(t,x) = \Delta w(t,x) +\displaystyle\int_0^t N(\sigma) w(t-\sigma,x)\,d\sigma & \text{in $(0,T)\times \Omega=:Q_T$}
\\
w(t,x)=u(t,x) & \text{on $(0,T)\times\partial\Omega=:\Sigma_T$}
\\
w(0,x) = w_0(x) & \text{in $\Omega$.}
\end{cases}
\end{equation*}

\noindent
At the outset, let us think of the function $u=u(t,x)$ as a given boundary datum.
Then, by using the renowned Fattorini-Balakrishnan method to attain an abstract (re)formulation of the boundary value problem, one easily finds the integro-differential equation
\begin{equation*}
w'=A(w-Du)+\displaystyle\int_0^t N(\sigma)A[w(t-\sigma)-Du(t-\sigma)]\,d\sigma\,, \qquad t\in (0,T]
\end{equation*}
in the unknown $w(t):=w(t,\cdot)$ and with $u(t):=u(t,\cdot)$, where $A$ is the realization of the Laplacian $\Delta$ in $L^2(\Omega)$ with homogeneous Dirichlet boundary condition, while
$D$ is the (so called) Dirichlet mapping -- namely, the map that associates to a boundary datum its harmonic extension in the interior of the domain $\Omega$. 
Setting $B=-AD$, thus with 
\begin{equation*}
B\colon L^2(\partial\Omega)\longrightarrow [\cD(A^*)]'
\end{equation*} 
an {\em unbounded} operator, one arrives at the integro-differential equation
\begin{equation} \label{e:full-eq}
w'=Aw+Bu+\int_0^t N(t-\sigma)\big[Aw(\sigma)+Bu(\sigma)\big]\,d\sigma\,, \qquad t\in (0,T]\,;
\end{equation}
see e.g. \cite[Section~3.1]{pandolfi-book}.
It is apparent in \eqref{e:full-eq} and is important to emphasise that the very same operator control $B$ pops up inside the convolution integral.  

\smallskip
With the function $u(t,x)$ now interpreted as a boundary input on $\partial\Omega$, allowed to vary in an appropriate class of admissible controls, we note that the controlled integro-differential equation \eqref{e:full-eq} poses various technical challenges:

\begin{itemize}
\item
the realization $A$ of the differential operator occurs in the convolution term (and yet MacCamy's trick may help to remove it),

\item
the control operator $B$ is unbounded,

\item 
(last but not least) the past values of both the dynamics variable $w$ and the control $u$ 
influence the evolution.
\end{itemize}

When it comes to the LQ problem for evolution equations with memory, a clear analog of the Riccati equations
which are connected to the minimization problem in the memoryless case was unavailable until very recently.
With focus on a simple integro-differential equation in $\mathbb{R}^d$, the work \cite{pandolfi-memory_2018} infers a coupled system of three quadratic (matrix) equations associated with the optimal control problem; solving it provides
the matrices that occur in the feedback formula in a univocal manner.
This result has clarified a question not entirely solved in \cite{pritchard-you_1996} and remained open
for more than twenty years. 
A subsequent extension to a more general class of control systems and to tracking-type functionals 
is found in \cite{pandolfi-memory_2024}.

Spurred by the advances in a finite-dimensional context, in the absence of this type of findings in the PDE literature and with various distinct technical challenges to be tackled, the authors pursued a strategy where the difficulties are taken one at a time.
In a first work, that is \cite{ac-bu-memory1_2024}, we adopted a variational and Riccati-based approach to the LQ problem for the control system
\begin{equation*}
w'=Aw+Bu+\displaystyle\int_0^t k(t-\sigma)w(\sigma)\,d\sigma\,, \qquad t\in (0,T]\,,
\end{equation*}
where in comparison to \eqref{e:full-eq} the dyamics operator $A$ is absent in the integral term and the past values of the control are not involved; still, we solved an open problem until then.

Subsequently, in \cite{ac-bu-JOTA} we focused on the LQ problem for a control system where the memory of the control function
$u$ is brought into the picture, whereas the memory of $w$ is neglected, that is
\begin{equation} \label{e:system-start}
w'=Aw+Bu+\int_0^t k(t-\sigma) Bu(\sigma)\,d\sigma\,, \qquad t\in (0,T]\,.
\end{equation}
We note that in both \cite{ac-bu-memory1_2024} and \cite{ac-bu-JOTA} it is assumed that $B$ is a {\em bounded} operator; however, the respective outcomes are distinct and the studies overcome specific technical hurdles.

\smallskip
In this article we expand the reach of our work to deal with the latter model equation 
in the case when $B$ is unbounded, while $A$ is the generator of strongly continuous semigroup in a Hilbert space $\cH$, which in addition is analytic -- these two features being consistent with the full 
integro-differential equation \eqref{e:full-eq} -- and ascertain the findings of \cite{ac-bu-JOTA}
in this more complicated setting. 

As we have done successfully in our recent works \cite{ac-bu-memory1_2024,ac-bu-JOTA}, we adapt to the problem at hand the general line of argument carried out in the study of the LQ problem for relevant classes of memoryless infinite-dimensional control systems that describe boundary value problems for PDE \cite{las-trig-redbooks}.
The major steps of this path involve

\begin{itemize}
\item[-]
the existence of a unique minimizer (the open-loop optimal control),
\item[-]
the optimality condition, which in particular brings about 
\item[-] 
an operator $P(t)$ which enters the feedback formula $\hat{u}(t)=-B^*P(t)\hat{w}(t)$ as well as
the optimal cost;  
\item[-]
whether $P(t)$ does solve the differential Riccati equation (RE) corresponding to the optimal control problem, which establishes the property of {\em existence} for the RE; then the key issue is 
\item[-] 
{\em uniqueness} for the RE,
\end{itemize}
thereby achieving the closed-loop synthesis of the optimal control.
The precise functional-analytic setting, the main results and an outline of the paper are provided in the next subsections.
 
We conclude this introductory part including some bibliographical references (the list is by no means exhaustive).
Suggested monographs are \cite{renardy-etal_1987}, \cite{pruess_1993}, \cite{pandolfi-book}, along with the references therein.
Still in the context of optimal control for deterministic evolution equations with memory, for more general frameworks than the LQ one for the controlled dynamics and/or the functionals to be minimized -- in particular, semilinear PDE and/or non-quadratic costs -- see \cite{cannarsa-etal_2013} and \cite{casas-yong-memory_2023}, where the optimal strategies are characterized via first- and second-order optimality conditions, respectively.
Although there does not seem to be an overlapping with our present and earlier work, we point out the following works pertaining to stochastic model equations: 
\cite{wang-t_2018}, \cite{bonaccorsi-confortola_2020}, \cite{abi-etal_2021a,abi-etal_2021b}
\cite{han-etal_2023}, \cite{wang-h-etal_2023}, \cite{hamaguchi-wang_2024}.

Due to space limitations, it is not possible to give an account of the various contributions
to the other great questions of control theory for integro-differential PDE such as reachability, controllability, unique continuation, observability and inverse problems via Carleman estimates, stability and uniform decay rates.
We remark, however, that most of the aforementioned studies concern PDE with infinite
memory of the evolution, which is not our case. 


\subsection{Setup}
Consider the control system \eqref{e:system-start}, supplemented with an initial condition 
$w(0)=w_0\in \cH$; the function $u(\cdot)$ -- having a role of a control action -- varies in
$\cU:=L^2(0,T;U)$.
The function spaces $\cH$ and $U$, the operators $A$ and $B$, the kernel $k$ are assumed to satisfy the following properties.

 
\begin{assumptions}[Abstract setup] \label{a:ipo_0} 
Let $\cH$, $U$ be separable complex Hilbert spaces. It is assumed that
\begin{itemize}
\item[A1.]
The linear operator $A\colon \cD(A)\subset \cH \to \cH$ is the infinitesimal generator of a strongly continuous semigroup $\{e^{At}\}_{t\ge 0}$ on $\cH$, which is also analytic; hence, the fractional powers $(\lambda_0-A)^\alpha$, $\alpha\in (0,1)$, are well-defined for some $\lambda_0>0$; 

\smallskip

\item[A2.] 
the control operator $B$ satisfies $B\in \cL(U,[\cD(A^*)]')$, and 
\begin{equation}
\exists \gamma\in (0,1)\colon \quad (\lambda_0-A)^{-\gamma}B\in \cL(U,\cH) ;
\end{equation}

\smallskip

\item[A3.]
the kernel $k(\cdot)$ satisfies $k\in L^2(0,T;\cL(\cH))$, along with the commutativity property $e^{tA}k=ke^{tA}$.
\end{itemize}

\end{assumptions} 

\begin{remark}
{\begin{rm} (On the values $\lambda_0$ and $\gamma$). 
\end{rm}}
We will set $\lambda_0=0$ and denote the fractional powers $A^\alpha$ (in place of 
$(-A)^\alpha$) throughout this work for simplicity.
Furthermore, in the sequel focus will be placed on the values $\gamma>1/2$ of the parameter in A2. of the Assumptions~\ref{a:ipo_0}.
On one side, the range $(1/2,1)$ for the values of $\gamma$ brings about a worse regularity of relevant functions/operators; and in addition, motivation for the consideration of this range comes from the optimal boundary control of the heat equation with memory and Dirichlet boundary input, for which we have $\gamma=3/4+\epsilon$, $\epsilon\in (0,1/4)$ -- besides, in fact, $\lambda_0=0$.
\end{remark}

\smallskip
It is natural to introduce the concept of {\em mild} solution to the Cauchy problems associated with the control system \eqref{e:system-start}, namely, the one that corresponds to a given initial datum $w_0\in \cH$ at the initial time $t=0$ and to a control action $u(\cdot)\in L^2(0,T;U)$, which
reads as
\begin{equation} \label{e:mild-sln}
w(t)=e^{At} w_0 +\int_0^t e^{A(t-q)}Bu(q)\,dq +\int_0^t e^{A(t-q)} \int_0^q k(q-p)Bu(p)\,dp\,dq
\end{equation}
and whose regularity properties (in time and space) will be clarified combining 

\begin{itemize}

\item
the well known regularity results pertaining to the (so called) {\em input-to-state} map $L_s$ defined
for any $s\in [0,T)$ by 
\begin{equation} \label{e:L_s}
L_s\colon u(\cdot) \longmapsto (L_su)(t) =\int_s^t e^{A(t-r)}Bu(r)\,dr\,,
\end{equation}
as well as the ones for its adjoint
\begin{equation} \label{e:L_s*}
L_s^*\colon f(\cdot) \longmapsto (L_s^*f)(t)=\int_t^T B^*e^{A^*(\sigma-t)}f(\sigma)\,d\sigma
\end{equation}
(see e.g. \cite[Proposition 3.4]{ac-bu-las_2013} or Proposition~\ref{p:Ls} at the end), 
which are both key in the memoryless case, along with

\item 
the regularity properties of the novel operator $H_s$ (as well as $\cK_s$) brought about by the memory,
discussed and proved in the next section; see Proposition~\ref{e:regularity-newop}.
\end{itemize}


To the model equation \eqref{e:system-start} we associate the following quadratic functional over a
given (finite) time interval $[0,T]$:
\begin{equation} \label{e:cost}
J(u)=J_T(u,w_0)=\int_0^T \left(\|Cw(t)\|_{\cH}^2 + \|u(t)\|_U^2\right)dt\,, 
\end{equation}
where the weighting operator $C$ simply satisfies 

\begin{equation} \label{e:ipo_1}
\hspace{-8cm} \text{(A4)} \qquad\myspace C\in \cL(\cH)\,.
\end{equation}

\noindent
The simplified notation $J(u)$ should be self-explanatory and will be used throughout. 

The optimal control problem is formulated in the usual classical way.


\begin{problem}[The optimal control problem] \label{p:problem-0}
Given $w_0\in \cH$, seek a control function $\hat{u}(\cdot)=\hat{u}(\cdot,0,w_0)$ which minimizes the functional \eqref{e:cost} overall $u\in L^2(0,T;U)$, where $w(\cdot)$ is the mild solution to \eqref{e:system-start} (given by \eqref{e:mild-sln}) corresponding to the control function $u(\cdot)$ and with initial datum $w_0$ (at time $0$). 
\end{problem}


\subsection{Main results} 
A foundational point of our line of argument -- known in the literature as the dynamic programming
approach, dating back to the work of Richard E.\,Bellman in the fifties  -- is the embedding of the optimal control problem in a family of similar optimization problems, depending on suitable parameters, here the initial time $s\in [0,T)$ -- besides the initial {\em state}, whose actual structure will be clarified below, see \eqref{e:initial-data}.

Following an approach pursued e.g. in the works \cite{pandolfi-memory_2018}, \cite{ac-bu-memory1_2024},
\cite{pandolfi-memory_2024} and \cite{ac-bu-JOTA}, with the initial time $s$ allowed to vary in the interval $[0,T)$, we consider as initial data the elements 
\begin{equation} \label{e:initial-data}
X_0=\begin{cases} w_0 & s=0
\\[1mm]
\begin{pmatrix}w_0
\\[1mm] 
\eta(\cdot)
\end{pmatrix} & 0<s<T
\end{cases}
\end{equation}
where $\eta(\cdot)$ is a given function in $L^2(0,T;U)$; we accordingly define the state space as
\begin{equation}\label{e:state_space}
Y_s:= \begin{cases}
\cH & s=0
\\
\cH\times L^2(0,s;U) & 0<s<T\,.
\end{cases}
\end{equation}

Consistently, the mild solution to the control system \eqref{e:system-start} supplemented with the initial datum $X_0$ (defined in \eqref{e:initial-data}) at time $s$ is rewritten as
\begin{equation} \label{e:mild-sln_s}
w(t)=e^{A(t-s)} w_0 +\big[\big(L_s+H_s)u\big](t)+ \cK_s \eta(t)\,,
\end{equation}
with the operators $L_s$, $H_s$ and $\cK_s$ defined by
\begin{subequations}
\begin{align}
L_s u(t)\equiv(L_s u)(t)&=\int_s^t e^{A(t-q)}Bu(q)\,dq
\\[1mm]
H_s u(t)\equiv(H_s u)(t)&=\int_s^t e^{A(t-\sigma)} \int_s^\sigma k(\sigma-q)Bu(q)\,dq\,d\sigma
\label{e:Hs_0} 
\\[1mm]
\cK_s \eta(t)\equiv(\cK_s \eta)(t)&=\int_s^t e^{A(t-\sigma)} \int_0^s k(\sigma-q)B\eta(q)\,dq\,d\sigma
\notag\\
&= \int_0^s e^{A(t-\sigma)} \int_s^t k(\sigma-q)B\eta(q)\,dq\,d\sigma\,.
\label{e:cKs_0}
\end{align}
\end{subequations}
By setting
\begin{equation} \label{e:lambda}
\lambda(t,q,s):=\int_s^t e^{A(t-\sigma)} k(\sigma-q)B\,d\sigma\,, \qquad 0\le s\le q\le t< T\,,
\end{equation} 
the terms $H_s u(t)$ and $\cK_s u(t)$ (in \eqref{e:Hs_0} and \eqref{e:cKs_0}, respectively) are rewritten readily and neatly as follows:

\begin{align}
H_s u(t) &=\int_s^t \lambda(t,q,q)u(q)\,dq\,,
\label{e:Hs_1}
\\[1mm]
\cK_s \eta(t) &=\int_0^s \lambda(t,q,s)\eta(q)\,dq\,.
\label{e:cKs_1}
\end{align}

We introduce the family of functionals
\begin{equation} \label{e:cost_s}
J_s(u)=J_{T,s}(u,X_0)=\int_s^T \big(\|Cw(t)\|_\cH^2 + \|u(t)\|_U^2\big)\,dt\,;
\end{equation}
the relative optimal control problem is formulated in a natural way.


\begin{problem}[Parametric optimal control problem] \label{p:problem_s}
Given $X_0\in Y_s$, seek a control function $\hat{u}=\hat{u}(\cdot,s,X_0)$ which minimizes the functional \eqref{e:cost_s} overall $u\in L^2(s,T;U)$, where $w(\cdot)$ -- given by \eqref{e:mild-sln_s} -- is the solution to \eqref{e:system-start} corresponding to a control function $u(\cdot)$ and with initial datum $X_0$ (at time $s$). 
\end{problem}


In the following result we gather the principal findings of this work, namely, the specific representation of the unique optimal control in closed-loop form, but also that the three linear and bounded operators which occur in the said formula do solve {\em uniquely} a system of coupled quadratic equations.
 
\begin{theorem}[Main results] \label{t:main}
With reference to the optimal control problem \eqref{e:mild-sln_s}-\eqref{e:cost_s}, under the Assumptions~\ref{a:ipo_0} and the hypothesis \eqref{e:ipo_1}, the following statements are valid for any $s\in [0,T)$.

\begin{enumerate}

\item[\textbf{S1.}] 
For each $X_0\in Y_s$ there exists a unique optimal pair $(\hat{u}(\cdot,s,X_0),\hat{w}(\cdot,s,X_0))$
which satisfies 
\begin{equation} \label{e:better-reg}
\hat{u}(\cdot,s,X_0)\in C([s,T],U)\,, \quad \hat{w}(\cdot,s,X_0)\in C([s,T],\cH)\,.
\end{equation}

\smallskip

\item[\textbf{S2.}]
There exist three linear bounded operators, denoted by $P_0(s)$, $P_1(s,p)$, and $P_2(s,p,q)$ -- 
defined in terms of the optimal evolution and of the data of the problem(see the expressions \eqref{e:riccati-ops} and \eqref{e:riccati-ops_2})  
--, such that the optimal cost is given by  
\begin{equation} \label{e:optimal-cost_1}
\begin{split}
\qquad J_s(\hat{u}) &=\big(P_0(s) w_0,w_0\big)_{\cH}
+ 2\text{Re}\, \int_0^s \big(P_1(s,p)\eta(p),w_0\big)_{\cH} \,dp
\\[1mm]
& + \int_0^s \!\!\int_0^s \big(P_2(s,p,q)\eta(p),\eta(q)\big)_U\,dp\,dq
\equiv \big(P(s)X_0,X_0\big)_{Y_s}\,.
\end{split}
\end{equation}
$P_0(s)$ and  $P_2(s,p,q)$ are self-adjoint and non-negative operators in the respective functional
spaces $H$ and $L^2(0,s;U)$; in addition, it holds 
\begin{equation*}
P_2(s,p,q)=P_2(s,q,p)\,.
\end{equation*}

\smallskip

\item[\textbf{S3.}] 
The optimal control admits the following representation:
\begin{equation} \label{e:feedback_2}
\begin{split}
\hat{u}(t,s,X_0)&=-\big[B^*P_0(t)+P_1(t,t)^*\big]\hat{w}(t,s,X_0)
\\
& \qquad -\int_0^t \big[B^*P_1(t,p)+P_2(t,p,t)\big]\theta(p)\,dp\,,
\end{split}
\end{equation}
with
\begin{equation*}
\theta(\cdot)=
\begin{cases} \eta(\cdot) & \text{in $[0,s)$}
\\
\hat{u}(\cdot,s,X_0) & \text{in $[s,t)$}
\end{cases}
\end{equation*}
and the operators $P_i$ are given by the formulas \eqref{e:riccati-ops_2} (originally, \eqref{e:riccati-ops}), $i\in \{0,1,2\}$. 

\smallskip

\item[\textbf{S4.}] 
The operators $P_0(t)$, $P_1(t,p)$, $P_2(t,p,q)$ -- as from \textbf{S3.} --
satisfy the following coupled system of equations, for every $t\in[0,T)$, $p,q\in [0,t]$, and for any $x,y\in \cD(A)$, $v,u\in U$:
\begin{equation} \label{e:DRE}
\hspace{3mm}
\begin{cases}
&\frac{d}{dt}\big(P_0(t)x,y\big)_{\cH} +\big(P_0(t)x,Ay\big)_{\cH} 
+ \big(Ax,P_0(t)y\big)_{\cH} + \big(C^*Cx,y\big)_{\cH}  
\\[1mm] 
&\myspace - \big([B^*P_0(t)+P_1(t,t)^*]x,[B^*P_0(t)+P_1(t,t)^*]y\big)_U=0 
\\[2mm]
&\frac{\partial}{\partial t} \big(P_1(t,p)v,y\big)_{\cH} + \big(P_1(t,p)v,Ay\big)_{\cH} 
+ \big(k(t-p)Bv,P_0(t)y\big)_{\cH} 
\\[1mm] 
&\qquad\quad
- \big([B^*P_1(t,p)+P_2(t,p,t)]v,[B^*P_0(t)+P_1(t,t)^*]y\big)_U =0
\\[2mm]
& \frac{\partial}{\partial t}\big( P_2(t,p,q)u,v\big)_U
+\big(P_1(t,p)u,k(t-q)Bv\big)_{\cH}
+\big(k(t-p)Bu,P_1(t,q)v\big)_{\cH}
\\[1mm]
&\qquad\quad  -\big([B^*P_1(t,p)+P_2(t,p,t)]u,[B^*P_1(t,q)+P_2(t,q,t)]v\big)_U=0
\end{cases}
\end{equation}
with final conditions
\begin{equation} \label{e:final}
P_0(T)=0\,, \; P_1(T,p)=0\,, \; P_2(T,p,q)=0\,.
\end{equation} 

\smallskip

\item[\textbf{S5.}] 
There exists a unique triplet $(P_0(t),P_1(t,p),P_2(t,p,q))$ that solves the coupled system \eqref{e:DRE} and fulfils the final conditions \eqref{e:final}, within the class of linear bounded operators (in the respective spaces), the former and the latter being self-adjoint and non-negative.  
\end{enumerate}

\end{theorem}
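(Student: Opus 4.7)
The plan is to follow the dynamic programming roadmap outlined in the introduction, proving \textbf{S1} through \textbf{S5} in order, with the unboundedness of $B$ requiring careful control of fractional-power regularity throughout.

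For \textbf{S1}, I would first establish that the maps $L_s, H_s \colon L^2(s,T;U) \to L^2(s,T;\cH)$ (and, in fact, into $C([s,T];\cH)$) are bounded, using the regularity of the input-to-state operator recorded in Proposition~\ref{p:Ls} together with Proposition~\ref{e:regularity-newop} for the new memory operators. This makes $J_s$ a continuous, strictly convex, coercive quadratic form on $L^2(s,T;U)$; existence and uniqueness of $\hat u$ follow. The improved continuity $\hat u\in C([s,T],U)$ comes from the Euler--Lagrange identity $\hat u+(L_s+H_s)^*C^*C\,\hat w=0$ combined with the regularity of the adjoints $L_s^*$ (see \eqref{e:L_s*}) and of $H_s^*$, $\cK_s^*$, which bootstraps the integrability of $\hat u$ from $L^2$ to $C$.

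For \textbf{S2}, I would exploit the linear dependence of $\hat w$ and $\hat u$ on $X_0=(w_0,\eta)$. Writing $\hat w(\cdot,s,X_0)=\Phi_s(\cdot)w_0+\Psi_s(\cdot)\eta$ with $\Phi_s$, $\Psi_s$ bounded into $C([s,T];\cH)$, substitution into the quadratic cost splits $J_s(\hat u)$ into three bilinear pieces, which by inspection produce the operators $P_0(s)$, $P_1(s,p)$, $P_2(s,p,q)$ in precisely the form \eqref{e:optimal-cost_1}; self-adjointness of $P_0$, $P_2$ and the symmetry $P_2(s,p,q)=P_2(s,q,p)$ are automatic from the symmetric structure of the cost, while non-negativity follows from the non-negativity of $J_s$ restricted to the appropriate data class (take $w_0=0$ for $P_2$, $\eta=0$ for $P_0$).

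For \textbf{S3} and \textbf{S4}, the technical heart of the argument, I would use Bellman's optimality principle: the restriction of $\hat u(\cdot,s,X_0)$ to $[t,T]$ coincides with the optimal control $\hat u(\cdot,t,\widetilde X_0)$ launched from the induced state $\widetilde X_0=(\hat w(t),\theta|_{[0,t]})$ at time $t$. Applying the pointwise optimality condition at time $t$ to this shifted problem and identifying the result with the quadratic-form representation from \textbf{S2} yields the feedback \eqref{e:feedback_2}. To obtain the Riccati system, one differentiates the identity $J_t(\hat u)=(P(t)X_0,X_0)_{Y_t}$ in $t$ with $X_0$ held fixed on the relevant intervals, and substitutes the feedback expression for $\hat u(t,\cdot)$ on the right-hand side; the three equations in \eqref{e:DRE} then fall out by matching the coefficients of the bilinear pieces in $(w_0,w_0)$, $(w_0,\eta)$, $(\eta,\eta)$, while the final conditions \eqref{e:final} are trivial from the empty integration defining $J_T$. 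The delicate point is giving rigorous meaning to $B^*P_0(t)y$ when $B^*$ is unbounded: one needs $P_0(t)$ to map into $\cD(B^*)$, equivalently into a suitable fractional-power space $\cD((A^*)^\alpha)$, which is precisely where the assumption $\gamma>1/2$ in \textbf{A2} enters and why the regularity propositions are indispensable.

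For \textbf{S5}, the expected main obstacle, I would argue by identification: given any triplet $(\widetilde P_0,\widetilde P_1,\widetilde P_2)$ of bounded operators (with the required symmetry and non-negativity) solving \eqref{e:DRE}--\eqref{e:final}, define a candidate control via the right-hand side of \eqref{e:feedback_2} with $\widetilde P_i$ in place of $P_i$, solve the resulting closed-loop integro-differential equation, and use the Riccati identities plus the final conditions to show that its cost equals $(\widetilde P(s)X_0,X_0)_{Y_s}$. Simultaneously evaluating the cost of the true optimum and subtracting forces any $\widetilde P_i$-generated feedback to minimize $J_s$; uniqueness from \textbf{S1} then forces $\widetilde P_i=P_i$. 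The algebraic manipulations parallel the classical memoryless Riccati uniqueness proof in \cite{las-trig-redbooks}, but every step must be re-done in the enlarged state space $Y_s$ and with unbounded $B$; this is where the non-trivial work lies and constitutes the principal obstruction.
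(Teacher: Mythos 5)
Your roadmap for S1--S4 is essentially the paper's: optimality condition $\Lambda_s\hat u+N_sX_0=0$ with $\Lambda_s$ boundedly invertible, affine representation $\hat w=Z_1w_0+\int_0^sZ_2\eta$, identification of the cost operators $P_i$, and the dynamic-programming/transition argument for the feedback law. Two caveats. First, your parenthetical claim that $L_s,H_s$ map $L^2(s,T;U)$ into $C([s,T];\cH)$ is false in the regime of interest $\gamma>1/2$: by \eqref{e:reg-L_s} one only gets $L^{2/(2\gamma-1)}(s,T;\cH)$, and Corollary~\ref{c:mild-slns} records precisely that the lower regularity of $L_s$ prevails. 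This is harmless for existence/uniqueness of the minimizer (only $L^2\to L^2$ boundedness and coercivity of $J_s$ are needed) and the continuity of $\hat u$ is indeed recovered by the bootstrap you describe, but the misstatement should not propagate into later steps. Second, ``identifying the result with the quadratic-form representation from S2'' conceals the actual work: the operators \eqref{e:riccati-ops} do not appear as such in the formula produced by the optimality condition, and one must first derive the alternative one-sided representations \eqref{e:riccati-ops_2} (Lemma~\ref{l:key}) and then prove that the gains $B^*P_0(t)$, $B^*P_1(t,p)$ are well defined and suitably integrable (Propositions~\ref{p:riccati-ops-continui} and~\ref{p:bounded-gains}); you correctly flag the latter issue, but the former re-representation step is the key lemma your sketch omits.

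For S5 you take a genuinely different route. You propose the classical identification argument of \cite{las-trig-redbooks}: given a competing solution $\widetilde P$, run its feedback, complete the square using \eqref{e:DRE}--\eqref{e:final}, and invoke uniqueness of the minimizer plus polarization. The paper instead rewrites the coupled system as a single operator integral equation \eqref{e:big-IRE} on the augmented space, sets $V=P-Q$, and derives the a priori bound $A(s)\le C(T-s)^{1-\gamma}A(s)$ for a suitable collection of norms of $V_0,V_1,V_2$ and of the gains $B^*V_0,B^*V_1$, concluding $V\equiv 0$ by iteration over small intervals. Your route is viable in principle (it is how the memoryless parabolic case is handled), but it carries extra burdens the paper's avoids: the system \eqref{e:DRE} holds only when tested on $\cD(A)\times\cD(A)$, whereas mild trajectories corresponding to generic $L^2$ controls do not take values in $\cD(A)$ (indeed are only $L^{2/(2\gamma-1)}$ in time), so the differentiation of $(\widetilde P(t)X(t),X(t))$ along trajectories requires an integral reformulation or an approximation argument; moreover one must establish well-posedness of the closed loop generated by an arbitrary competing triplet, including a priori regularity of $B^*\widetilde P_0$ and $B^*\widetilde P_1$, before the completion of squares makes sense. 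The paper's integral-equation estimate sidesteps both issues, at the price of the longer chain of norm inequalities; your approach, if completed, would buy the additional information that any solution's quadratic form equals the value function.
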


\begin{remark}
We remark at the outset that despite the fact that the statements S1.-S5 of Theorem~\ref{t:main} are the same as those in \cite[Theorem~1, Section~1.1]{ac-bu-JOTA}, additional technical challenges are present and need to be overcome at several key steps of the respective proofs, in view of the unboundedness of the control operator $B$.
\end{remark}


\subsection{An outline of the paper}
The paper is organized as follows.
In the next section we pinpoint certain regularity properties of the operators brought about by the memory and which occur in the representation of the mild solutions to the control system.
These preliminary results allow to accurately assess the regularity in time of the $w$-component of the state variable (see Corollary~\ref{c:mild-slns}); in addition, they are utilized in the study of the optimization problem.
 
Section~\ref{s:feedback} focuses on the statements S1., S2.~and~S3. of Theorem~\ref{t:main}.
In order to prove S1., our starting point is once again the optimality condition.
We note here that unlike the case when the control operator is bounded, the continuity in time of the optimal solution $\hat{u}(\cdot)$ cannot be taken for granted; see Corollary~\ref{c:reg_coppia}. 
The three operators $P_i$ ($i=0,1,2$) which are building blocks of the quadratic form representing the optimal cost are singled out in Proposition~\ref{p:riccati-ops_0}.
A distinct reformulation of these operators achieved in Lemma~\ref{l:key} is called for in order to 
ascertain that they actually occur in a representation of the optimal control in closed-loop form, which eventually will be~\eqref{e:feedback_2}.

The proof of the statements S4. and S5. of Theorem~\ref{t:main}, namely, of the fact that the operators $P_i$ ($i=0,1,2$) solve uniquely the coupled system of three quadratic differential equations \eqref{e:DRE} is laid out in Section~\ref{s:wellposedRE}. 
In comparison with our earlier work \cite{ac-bu-JOTA}, the analysis needs to be supplemented with additional preliminary steps, as a consequence of the unboundedness of the control operator $B$; see Proposition~\ref{p:riccati-ops-continui} and Proposition~\ref{p:bounded-gains} -- the latter addressing the delicate issue of boundedness of the gain operators --, in turn based on the novel Lemma~\ref{l:regularity} and Lemma~\ref{l:increm_Z}.
 
A short Appendix recalls a few instrumental results pertaining to the regularity of the input-to-state map in the memoryless parabolic case, and to convolution integrals.


\section{Prerequisite regularity results} \label{s:regularity}
Aiming to establish the regularity of any mild solution \eqref{e:mild-sln_s} and since the regularity of the map $L_s$ is well known, we pinpoint in this section the regularity properties of the operators $H_s$, $\cK_s$ (brought about by the memory) and the respective adjoints $H_s^*$, $\cK_s^*$.
We need to explore the one of $\lambda(t,q,s)$ as well as to produce appropriate estimates of certain differences, to accomplish this; the outcomes of this analysis are stated in two separate Lemmas. 
(These are not trivial, due to the presence of the unbounded operator $B$.)


\begin{lemma} \label{l:lemma1}
Let the Assumptions~\ref{a:ipo_0} with $\gamma>\frac{1}{2}$ be valid, and 
let $\lambda(t,q,s)$ be the operator defined in \eqref{e:lambda}.
Then, with $r=\frac{2}{2\gamma-1}$ we have for $0\le  q \le s \le t \le T$
\begin{subequations}
\begin{align}
&\|\lambda(\cdot,q,s)\|_{L^r(s,T;\cL(U,\cH))}\lesssim \|k\|_{L^2(0,T;\cL(\cH))},
\label{e:stima1}
\\[1mm]
&\|\lambda(t,\cdot,s)\|_{L^r(0,s;\cL(U,\cH))}\lesssim \|k\|_{L^2(0,T;\cL(\cH))},
\label{e:stima2}
\\[1mm]
q\longmapsto \|\lambda(t,q,q)\|_{\cL(U,\cH)}\in L^r(s,T), & \; \textrm{with}
\notag
\\[1mm]  
&\|\lambda(t,\cdot,\cdot)\|_{L^r(s,T;\cL(U,\cH))}\lesssim \|k\|_{L^2(0,T;\cL(H))}.
\label{e:stima3}
\end{align}
\end{subequations}

\end{lemma}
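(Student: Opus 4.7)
The plan is to reduce all three estimates to $L^r$ bounds for a single scalar convolution, via a factorization of the unbounded operator $B$ followed by a weak Young (Hardy--Littlewood--Sobolev, HLS) inequality.

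First, using A2 with the convention $\lambda_0=0$, I would write $B=A^{\gamma}D$ with $D:=A^{-\gamma}B\in\cL(U,\cH)$. The commutativity $e^{tA}k=k e^{tA}$ of A3 extends to the fractional powers of $A$ (since $A^{-\gamma}$ is given by a Balakrishnan integral of the semigroup, so it inherits the commutation with $k$), so inside the integral defining $\lambda$,
\begin{equation*}
e^{A(t-\sigma)}\,k(\sigma-q)\,B \;=\; A^{\gamma}\,e^{A(t-\sigma)}\,k(\sigma-q)\,D.
\end{equation*}
Combined with the analyticity estimate $\|A^{\gamma}e^{A\tau}\|_{\cL(\cH)}\le C\,\tau^{-\gamma}$, this yields the pointwise bound
\begin{equation*}
\|\lambda(t,q,s)\|_{\cL(U,\cH)} \;\le\; C\,\|D\|\int_s^t (t-\sigma)^{-\gamma}\,\|k(\sigma-q)\|_{\cL(\cH)}\,d\sigma,
\end{equation*}
which is the single inequality underlying all three claims.

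Next I would recognize the right-hand side as a correlation between the scalar functions $K(\sigma):=\|k(\sigma)\|_{\cL(\cH)}\in L^2(0,T)$ (by A3) and $\phi(\tau):=\tau^{-\gamma}\in L^{1/\gamma,\infty}(0,T)$. The generalized Young/HLS inequality with exponents satisfying $\tfrac{1}{r}+1=\tfrac{1}{2}+\gamma$ gives precisely $r=\tfrac{2}{2\gamma-1}$, and bounds the resulting $L^r$ norm by $\|K\|_{L^2}\,\|\phi\|_{L^{1/\gamma,\infty}}\lesssim\|k\|_{L^2(0,T;\cL(\cH))}$. Each of \eqref{e:stima1}--\eqref{e:stima3} is then obtained by taking the $L^r$ norm of the pointwise bound in a different variable. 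For \eqref{e:stima1}, fixing $q,s$, the right-hand side viewed as a function of $t\in(s,T)$ is (the restriction of) the convolution $(K_q*\phi)(t)$ with $K_q(\sigma)=K(\sigma-q)\mathbf{1}_{[s,\infty)}(\sigma)$, and HLS closes the estimate. For \eqref{e:stima2}, fixing $t$, the bound becomes (after a sign flip in the argument of $k$, which preserves $L^2$ norms) a convolution in $q\in(0,s)$ of the same type. For \eqref{e:stima3}, setting $s=q$ and substituting $\tau=\sigma-q$ in the pointwise bound gives $\|\lambda(t,q,q)\|\le C\,(K*\phi)(t-q)$; the change of variables $u=t-q$ in the outer $L^r$-integration then reduces the claim to $\|K*\phi\|_{L^r(0,T)}$, again controlled by HLS.

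The main technical care lies in the first step: justifying the factorization $B=A^{\gamma}A^{-\gamma}B$ at the level of operators into the extrapolation space $[\cD(A^*)]'$, and verifying that the composition $A^{\gamma}e^{A(t-\sigma)}(A^{-\gamma}B)$ legitimately defines an element of $\cL(U,\cH)$ with norm $\lesssim(t-\sigma)^{-\gamma}$. Once this is set up, the hypothesis $\gamma>1/2$ is precisely what makes HLS produce a finite output exponent $r=2/(2\gamma-1)$, balancing the singularity $(t-\sigma)^{-\gamma}$ against the $L^2$ regularity of $k$; at the endpoint $\gamma=1/2$ the method breaks down, which is consistent with the restriction stated in the lemma.
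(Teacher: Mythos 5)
Your proposal is correct and follows essentially the same route as the paper: factor $B=A^{\gamma}(A^{-\gamma}B)$ using A2 and the commutativity in A3, obtain the pointwise bound $\|\lambda(t,q,s)\|_{\cL(U,\cH)}\le C\int_s^t (t-\sigma)^{-\gamma}\|k(\sigma-q)\|_{\cL(\cH)}\,d\sigma$ from the analyticity of the semigroup, and then close each of \eqref{e:stima1}--\eqref{e:stima3} with a change of variables and the fractional-integration (generalized Young) inequality with $\tfrac1r+1=\tfrac12+\gamma$ --- which is exactly the content of the paper's appeal to Theorem~383 of Hardy--Littlewood--P\'olya. Your explicit justification of the commutation of $k$ with the fractional powers via the Balakrishnan representation is a point the paper leaves implicit.
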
 


\begin{proof}
In order to establish \eqref{e:stima1}, we rewrite $\lambda(t,q,s)$ as
\begin{equation*}
\lambda(t,q,s)=\int_s^t A^\gamma e^{A(t-\sigma)} k(\sigma-q)[A^{-\gamma}B]\,d\sigma\,,
\end{equation*}  
which gives
\begin{equation*}
\|\lambda(t,q,s)\|_{\cL(U,\cH)}\le C\,\int_s^t \frac{1}{(t-\sigma)^\gamma} \|k(\sigma-q)\|_{\cL(\cH)}\,d\sigma
\end{equation*}  
for some positive constant $C$.
Then \eqref{e:stima1} follows by using \cite[Theorem~383]{hardy-etal}.
Concerning \eqref{e:stima2}, we write
\begin{equation*}
\int_0^s \|\lambda(t,q,s)\|_{\cL(U,\cH)}^r\,dq \lesssim \int_0^s \left[\int_s^t \frac1{(t-\sigma)^\gamma}\|k(\sigma-q)\|_{\cL(\cH)}\,d\sigma\right]^r dq\,,
\end{equation*}
and setting $p=\sigma-q$, $\tau=s-q$, we get 
\begin{equation*}
\int_0^s \|\lambda(t,q,s)\|_{\cL(U,\cH)}^r\,dq \lesssim \int_0^s \left[\int_\tau^{\tau+t-s} \frac1{(\tau+t-s-p)^\gamma} \|k(p)\|_{\cL(\cH)}\,dp\right]^r d\tau\,.
\end{equation*}
Using again \cite[Theorem~383]{hardy-etal} we conclude that
\begin{equation*}
\int_0^s \|\lambda(t,q,s)\|_{\cL(U,\cH)}^r\,dq \lesssim \|k\|_{L^2(0,T;\cL(\cH))}^r\,.
\end{equation*}
The estimate \eqref{e:stima3} is shown in a similar way.
\end{proof}


We now estimate the increments of $\lambda(t,q,s)$ with respect to each variable. 

\begin{lemma}\label{e:reg_lambda}
Let the Assumptions~\ref{a:ipo_0} with $\gamma>\frac{1}{2}$ be valid, and let $\lambda(t,q,s)$ be the operator defined in \eqref{e:lambda}. 
Then:

\begin{enumerate}
\item[(i)] for $T\ge t\ge \tau \ge s$ 
\begin{equation} \label{e:diff_lambda1}
\|\lambda(t,\cdot,s)-\lambda(\tau, \cdot,s)\|_{L^2(0,s;\cL(U,\cH))} \le C (t-\tau)^\theta, \quad 0<\theta<1-\gamma;
\end{equation}
\item[(ii)] for $T >s\ge q\ge p$
\begin{equation} \label{e:diff_lambda2}
\|\lambda(\cdot,q,s)-\lambda(\cdot, p,s)\|_{L^2(s,T;\cL(U,\cH))} \le 
C \|k(\cdot)-k(\cdot+q-p)\|_{L^2(0,T-q;\cL(U,\cH))};
\end{equation}
\item[(iii)] for $T>s > \sigma\ge q$
\begin{equation} \label{e:diff_lambda3}
\|\lambda(\cdot,q,s)-\lambda(\cdot,q,\sigma)\|_{L^2(s,T;\cL(U,\cH))} \le C(s-\sigma)^{1-\gamma} .
\end{equation}
\end{enumerate}

\end{lemma}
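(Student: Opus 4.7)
The unifying device in all three estimates is to factor the unbounded control operator as $B=A^\gamma\cdot A^{-\gamma}B$ inside the integral defining $\lambda(t,q,s)$, so that $A^{-\gamma}B\in\cL(U,\cH)$ by hypothesis A2.\ and the singular kernel becomes $\|A^\gamma e^{A\tau}\|_{\cL(\cH)}\le C\tau^{-\gamma}$ by analyticity. After this factorization every estimate reduces to Young/Minkowski manipulations on an integral of the schematic form $\int(t-\sigma)^{-\gamma}\|k(\sigma-q)\|\,d\sigma$, together with the ingredient $\|k\|_{L^2(0,T;\cL(\cH))}<\infty$. These are the same building blocks that underlie Lemma~\ref{l:lemma1}.

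For part (ii) the approach is the most transparent: write
\begin{equation*}
\lambda(t,q,s)-\lambda(t,p,s)=\int_s^t A^\gamma e^{A(t-\sigma)}\bigl[k(\sigma-q)-k(\sigma-p)\bigr]A^{-\gamma}B\,d\sigma,
\end{equation*}
take norms, and convolve in $t$. Since $\gamma<1$, the kernel $(t-\sigma)^{-\gamma}$ is in $L^1(0,T)$, so Young's inequality $L^1\ast L^2\hookrightarrow L^2$ yields the $L^2(s,T)$-norm bounded by $\|\,\|k(\cdot-q)-k(\cdot-p)\|\,\|_{L^2(s,T)}$. A change of variable $\mu=\sigma-q$, using $s\ge q\ge p$, delivers the stated right-hand side.

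For part (iii) the difference reduces, after the same factorization, to the single piece $-\int_\sigma^s A^\gamma e^{A(t-r)}k(r-q)A^{-\gamma}B\,dr$. The key computational point is that $\gamma>1/2$, which makes $\int_s^T(t-r)^{-2\gamma}\,dt\lesssim(s-r)^{1-2\gamma}$ for $r<s$. Minkowski's integral inequality in $t\in(s,T)$ followed by Cauchy--Schwarz in $r\in(\sigma,s)$ against $\|k(\cdot-q)\|_{L^2}\le\|k\|_{L^2}$ produces $\int_\sigma^s(s-r)^{1-2\gamma}\,dr \sim(s-\sigma)^{2-2\gamma}$, hence the claimed rate $(s-\sigma)^{1-\gamma}$.

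Part (i) is where the real obstacle lies, because we now differ in the upper limit \emph{and} in the semigroup argument. The plan is to split into the boundary piece $\int_\tau^t A^\gamma e^{A(t-\sigma)}k(\sigma-q)A^{-\gamma}B\,d\sigma$, for which Minkowski in $q$ followed by $\int_\tau^t(t-\sigma)^{-\gamma}\,d\sigma\lesssim(t-\tau)^{1-\gamma}$ is enough, and the ``bulk'' piece $\int_s^\tau A^\gamma\bigl[e^{A(t-\sigma)}-e^{A(\tau-\sigma)}\bigr]k(\sigma-q)A^{-\gamma}B\,d\sigma$. For the bulk piece the estimate
\begin{equation*}
\bigl\|A^\gamma\bigl(e^{A(t-\sigma)}-e^{A(\tau-\sigma)}\bigr)\bigr\|
=\bigl\|A^{\gamma+\theta}e^{A(\tau-\sigma)}\cdot A^{-\theta}\bigl(e^{A(t-\tau)}-I\bigr)\bigr\|
\lesssim(\tau-\sigma)^{-\gamma-\theta}(t-\tau)^\theta,
\end{equation*}
valid for any $\theta\in(0,1-\gamma)$ by analyticity and the standard bound $\|A^{-\theta}(e^{Ah}-I)\|\le Ch^\theta$, is the decisive tool; the constraint $\gamma+\theta<1$ is precisely what forces $\theta<1-\gamma$. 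Minkowski in $q$ and integrating the singular factor $(\tau-\sigma)^{-\gamma-\theta}$ over $(s,\tau)$ finishes the argument, with the expected H\"older exponent $\theta$ in $(t-\tau)$. I expect this interpolation/analyticity step to be the main technical point; the remaining assembly is standard once this estimate is in hand.
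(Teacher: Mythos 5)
Your proposal is correct and follows essentially the same route as the paper: factor $B=A^\gamma\cdot A^{-\gamma}B$, use $\|A^\gamma e^{A\tau}\|\lesssim\tau^{-\gamma}$, split the increment in $t$ into the boundary piece over $(\tau,t)$ and the bulk piece involving the semigroup difference, and handle (ii)--(iii) by convolution-type bounds in the remaining variable. The only (immaterial) differences are that you invoke the moment inequality $\|A^{-\theta}(e^{Ah}-I)\|\lesssim h^\theta$ where the paper writes $e^{A(t-\sigma)}-e^{A(\tau-\sigma)}=\int_{\tau-\sigma}^{t-\sigma}Ae^{Ar}\,dr$ and bounds $\int_{\tau-\sigma}^{t-\sigma}r^{-1-\gamma}\,dr\le(t-\tau)^\theta(\tau-\sigma)^{-\gamma-\theta}$ directly, and that you use Minkowski/Young in place of the paper's appeal to \cite[Theorem~383]{hardy-etal} -- both yield the same exponents.
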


\begin{proof} 
\textbf{(i)} For $T\ge t\ge \tau \ge s \ge q$ we write
\begin{equation*}
\begin{split}
&\lambda(t,q,s)-\lambda(\tau,q,s) =\int_\tau^t e^{A(t-\sigma)}k(\sigma-q)B\,d\sigma 
+\int_s^\tau \Big[\int_{\tau-\sigma}^{t-\sigma}Ae^{Ar}\,dr\Big]k(\sigma-q)B\,d\sigma
\\
& \quad = \int_\tau^t A^\gamma e^{A(t-\sigma)}k(\sigma-q)[A^{-\gamma}B]\,d\sigma 
+ \int_s^\tau \Big[\int_{\tau-\sigma}^{t-\sigma}A^{1+\gamma}e^{Ar}\,dr\Big]k(\sigma-q)[A^{-\gamma}B]\,d\sigma\,,
\end{split}
\end{equation*}  
so
\begin{equation}\label{e:differ1}
\begin{split}
\|\lambda(t,q,s)-\lambda(\tau,q,s)\|_{\cL(U,\cH)}
&\le C\int_\tau^t\frac{1}{(t-\sigma)^\gamma}\|k(\sigma-q)\|_{\cL(\cH)}\,d\sigma
\\
& \quad +C\int_s^\tau\int_{\tau-\sigma}^{t-\sigma}\frac{dr}{r^{1+\gamma}}\|k(\sigma-q)\|_{\cL(\cH)}\,d\sigma.
\end{split}
\end{equation}
The $L^2(0,s;\cL(U,\cH))$-norm of the terms in the right-hand side of \eqref{e:differ1} is estimated 
via the H\"older inequality, to find 
\[
\left[\int_\tau^t\frac{1}{(t-\sigma)^\gamma}\|k(\sigma-q)\|_{\cL(\cH)}\,d\sigma\right]^2 \le  C(t-\tau)^{2(1-\gamma)}\int_\tau^t \frac{1}{(t-\sigma)^\gamma}\|k(\sigma-q)\|_{\cL(\cH)}^2d\sigma;
\]
thus, by \cite[Theorem 383]{hardy-etal} we get
\[
\int_0^s \left[\int_\tau^t\frac{1}{(t-\sigma)^\gamma}\|k(\sigma-q)\|_{\cL(\cH)}\,d\sigma\right]^2 dq \le C(t-\tau)^{2(1-\gamma)} \|k\|_{L^2(0,T;\cH)}^2.
\]
As for the second term in the right-hand side of \eqref{e:diff_lambda1}, we have, for any $\theta\in \,(0,1-\gamma)\,$, 
\[
\left[\int_s^\tau\int_{\tau-\sigma}^{t-\sigma}\frac{dr}{r^{1+\gamma}}\|k(\sigma-q)\|_{\cL(\cH)}\,d\sigma\right]^2 \le \left[\int_s^\tau \frac{(t-\tau)^\theta}{(\tau-\sigma)^{\gamma+\theta}}\|k(\sigma-q)\|_{\cL(\cH)}d\sigma\right]^2
\]
and we deduce as before
\[
\int_0^s \left[\int_s^\tau\int_{\tau-\sigma}^{t-\sigma}\frac{dr}{r^{1+\gamma}}\|k(\sigma-q)\|_{\cL(\cH)}\,d\sigma\right]^2 \le C(t-\tau)^{2\theta} \|k\|_{L^2(0,T;\cH)}^2.
\]
\textbf{(ii)} Similarly, for $t\ge s > \sigma \ge q$,
\[
\lambda(t,q,s)-\lambda(t,q,\sigma) = -\int_s^\sigma A^\gamma e^{A(t-r)}k(r-q)(A^{-\gamma}B)\,dr,
\]
which yields
\begin{equation}\label{e:differ2}
\|\lambda(t,q,s)-\lambda(t,p,s)\|_{\cL(U,\cH)}\le C\int_s^t \frac1{(t-\sigma)^\gamma} \|k(\sigma-q)-k(\sigma-p)\|_{\cL(\cH)}\,d\sigma;
\end{equation}
proceeding as above we obtain the desired estimate.

\noindent
\textbf{(iii)} For $T\ge t \ge s \ge \sigma \ge q$, the estimate 
\begin{equation} \label{e:differ3}
\|\lambda(t,q,s)-\lambda(t,q,\sigma)\|_{\cL(U,\cH)}
\le C\int_s^\sigma \frac1{(t-r)^\gamma}\|k(r-q)\|_{\cL(\cH)}\,dr
\end{equation}
holds true.
The estimate for the $L^2(s,T;\cL(U,\cH))$-norm of the right-hand side of \eqref{e:differ3} is quite similar to the preceding ones, hence we omit it.
\end{proof}


We now use the previous results to pinpoint the regularity of the maps brought about by the memory.

\begin{proposition} \label{e:regularity-newop}
Let the Assumptions~\ref{a:ipo_0} on the operators $A$, $B$, $k(\cdot)$ hold true, with 
$\gamma>\frac{1}{2}$.
Then, the following regularity results pertain to the operators $H_s$, $\cK_s$ defined by \eqref{e:Hs_0} and \eqref{e:cKs_0} and to their respective adjoints $H_s^*$, $\cK_s^*$:
\begin{align}
& H_s\in \cL(L^2(s,T;U),C^\alpha([s,T],\cH)),
\label{e:reg-H}\\[1mm]
& \cK_s\in \cL(L^2(0,s;U),C^\alpha([0,s],\cH)),
\label{e:reg-K}\\[1mm]
& H_s^*\in \cL(L^2(s,T;\cH),C([s,T],U)),
\label{e:reg-H*}\\[1mm]
&\cK_s^*\in \cL(L^2(0,s;\cH),C([0,s],U)),
\label{e:reg-K*}
\end{align}
for any $\alpha< 1-\gamma$.
\end{proposition}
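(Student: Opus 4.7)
The plan is to derive the four regularity statements by combining the $L^r$-bounds and H\"older-type increment estimates for $\lambda(t,q,s)$ (Lemmas \ref{l:lemma1} and \ref{e:reg_lambda}) with Cauchy--Schwarz, the interpolation $L^r \hookrightarrow L^2$ on the bounded interval $(s,T)$ (recall $r>2$ since $\gamma<1$), and a Fubini-based dualisation for $H_s^*$ and $\cK_s^*$. Throughout I work from the explicit representations $(H_s u)(t) = \int_s^t \lambda(t,q,q)u(q)\,dq$ and $(\cK_s \eta)(t) = \int_0^s \lambda(t,q,s)\eta(q)\,dq$ in \eqref{e:Hs_1}--\eqref{e:cKs_1}, and I set $v(\sigma) := \int_\sigma^T B^* e^{A^*(t-\sigma)}f(t)\,dt$, which is the dual of the memoryless input-to-state map handled by Proposition \ref{p:Ls}.

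For \eqref{e:reg-H}: uniform boundedness of $(H_s u)(t)$ in $\cH$ follows from Cauchy--Schwarz combined with \eqref{e:stima3} and the embedding $L^r(s,T) \hookrightarrow L^2(s,T)$. For the H\"older modulus I split, for $s \le \tau \le t \le T$,
\[
(H_s u)(t) - (H_s u)(\tau) = \int_\tau^t \lambda(t,q,q) u(q)\,dq + \int_s^\tau \bigl[\lambda(t,q,q)-\lambda(\tau,q,q)\bigr] u(q)\,dq .
\]
The first summand is bounded by $C(t-\tau)^{1-\gamma}\|u\|_{L^2}$ via Cauchy--Schwarz and the interpolation $\|\lambda\|_{L^2(\tau,t)} \le (t-\tau)^{1/2-1/r}\|\lambda\|_{L^r(\tau,t)}$, where $1/2 - 1/r = 1-\gamma$. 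For the second, a close re-reading of the proof of Lemma \ref{e:reg_lambda}(i) reveals that its pointwise estimate uses only $\tau \ge q$; setting the third argument equal to $q$ then yields $\bigl(\int_s^\tau \|\lambda(t,q,q)-\lambda(\tau,q,q)\|^2_{\cL(U,\cH)}\,dq\bigr)^{1/2} \le C(t-\tau)^\theta$ for any $\theta \in (0, 1-\gamma)$. A further Cauchy--Schwarz gives \eqref{e:reg-H}.

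For \eqref{e:reg-K} both the uniform bound and the H\"older modulus follow by applying Cauchy--Schwarz to the explicit representation and invoking \eqref{e:stima2} and \eqref{e:diff_lambda1} respectively, since the only $(t,\tau)$-dependence sits in the factor $\lambda(t,q,s) - \lambda(\tau,q,s)$ with $q$ ranging in $(0,s)$ and with the third argument $s$ fixed. For the adjoints, changing the order of integration and using the commutativity $k^* e^{A^*\cdot} = e^{A^*\cdot} k^*$ (the dual of A3) gives
\[
(H_s^* f)(q) = \int_q^T k(\sigma-q)^* v(\sigma)\,d\sigma, \qquad (\cK_s^* f)(q) = \int_s^T k(\sigma-q)^* v(\sigma)\,d\sigma .
\]
By Proposition \ref{p:Ls}, $v$ belongs to a suitable $L^{r'}$-space over $(0,T)$; since $k \in L^2(0,T;\cL(\cH))$, a standard Young/continuity-of-translation argument then yields continuity of $q \mapsto (H_s^* f)(q)$ on $[s,T]$ and of $q \mapsto (\cK_s^* f)(q)$ on $[0,s]$, which are the claims \eqref{e:reg-H*} and \eqref{e:reg-K*}.

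The chief technical hurdle is the second summand in the decomposition for $H_s$: I must check that the increment bound of Lemma \ref{e:reg_lambda}(i), stated for a fixed third parameter, transfers to the \emph{diagonal} case where the third parameter coincides with the outer integration variable $q$. The proof of that lemma relies only on $\tau \ge s$, so taking $s = q \le \tau$ is legitimate and the estimate extends verbatim; still, this is the point where the unboundedness of $B$ (encoded in the weight $(t-\sigma)^{-\gamma}$ and the requirement $\gamma > 1/2$) is most acutely felt, and it is precisely what pins the H\"older exponent at $1 - \gamma$.
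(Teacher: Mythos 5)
Your argument for \eqref{e:reg-H} and \eqref{e:reg-K} is essentially the paper's own proof: the same splitting of the increment of $H_su$ at $\tau$, the same interpolation $\|\lambda\|_{L^2(\tau,t)}\le (t-\tau)^{1/2-1/r}\|\lambda\|_{L^r(\tau,t)}$ with $1/2-1/r=1-\gamma$ for the first summand, and the same diagonal increment estimate for the second; your observation that the proof of Lemma~\ref{e:reg_lambda}(i) transfers verbatim to the case where the third argument equals $q$ is correct and is precisely what the paper does implicitly (it simply rewrites the pointwise bound \eqref{e:differ1} with $s$ replaced by $q$ rather than citing the lemma). The treatment of $\cK_s$ via \eqref{e:stima2} and \eqref{e:diff_lambda1} also coincides with the paper's.

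The one step that does not survive scrutiny is your factorization of the adjoints. You write $(H_s^*f)(q)=\int_q^T k(\sigma-q)^*v(\sigma)\,d\sigma$ with $v(\sigma)=\int_\sigma^T B^*e^{A^*(t-\sigma)}f(t)\,dt$; but $v(\sigma)\in U$ while $k(\sigma-q)^*\in\cL(\cH)$, so the integrand is ill-typed. The commutativity in A3 is between $k$ and the semigroup only; it does not let you move $k(\sigma-q)^*$ past $B^*$, so there is no $q$-independent $v$ for which such a clean convolution formula holds. The correct expression keeps $B^*$ outermost, $(H_s^*f)(q)=\int_q^T\int_\sigma^T B^*e^{A^*(t-\sigma)}k(\sigma-q)^*f(t)\,dt\,d\sigma$, and the intended quantitative conclusion is recovered from the pointwise bound $\|B^*e^{A^*(t-\sigma)}k(\sigma-q)^*f(t)\|_U\le c(t-\sigma)^{-\gamma}\|k(\sigma-q)\|_{\cL(\cH)}\|f(t)\|_{\cH}$, followed by the Hardy--Littlewood inequality in $\sigma$ and Cauchy--Schwarz in $t$ (this is exactly how the paper obtains the $L^\infty$ bound); continuity in $q$ then follows from continuity of translation of $k$ in $L^2$ applied inside this double integral rather than to your (nonexistent) one-variable convolution. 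So the adjoint estimates are salvageable with the same ingredients, but as written that step would fail.
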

\begin{proof}
\textbf{1.} 
Starting from \eqref{e:Hs_1}, with $k\in L^2(0,T;\cL(\cH))$ and given $u\in L^2(s,T;U)$, we have
\begin{equation*} 
\begin{split}
\|H_su(t)\|_\cH &\le \int_s^t \|\lambda(t,q,q)\|_{\cL(U,\cH)}\|u(q)\|_U\,dq
\\
& \le \Big(\int_s^t \|\lambda(t,q,q)\|_{\cL(U,\cH)}^2\Big)^{1/2}\,
\Big(\int_s^t \|u(q)\|_U^2\,dq\Big)^{1/2}\le C
\end{split}
\end{equation*}
for some constant $C>0$ as a consequence of the H\"older inequality, which proves 
$H_su\in L^\infty(0,T;\cH)$.
The above basic regularity can be actualy enhanced: in order to prove the claimed H\"older continuity property \eqref{e:reg-H}, we evaluate $\|H_su(t)-H_su(\tau)\|_\cH$ for given $t,\tau\ge s\ge 0$ as follows, using the estimate established in \eqref{e:differ2}:
\begin{equation} \label{e:holderH}
\begin{split}
& \|H_su(t)-H_su(\tau)\|_\cH \\
& \qquad \le \Big\|\int_\tau^t \lambda(t,q,q)u(q)\,dq\Big\|_\cH
+ \Big\|\int_s^\tau \big[\lambda(t,q,q)-\lambda(\tau,q,q)\big]u(q)\,dq\Big\|_\cH
\\[1mm]
&\qquad \le \|\lambda(t,\cdot,\cdot)\|_{L^r(\tau,t;\cL(U,\cH))}(t-\tau)^{1/2-1/r}\|u\|_{L^2(s,T;U)}
\\
& \qquad \qquad + c \int_s^\tau\int_q^\tau \frac{(t-\tau)^\gamma}{(t-\sigma)^\gamma (\tau-\sigma)^\gamma}
\|k(\sigma-q)\|_{\cL(\cH)} \|u(q)\|_U\,d\sigma\,dq
\\[1mm]
& \qquad \le c(t-\tau)^{1/2-1/r}\|k\|_{L^2(0,T;\cL(H))}\|u\|_{L^2(s,T;U)}
\\
& \qquad \qquad 
+ \underbrace{c (t-\tau)^\alpha\int_s^\tau\int_q^\tau \frac{1}{(\tau-\sigma)^{\gamma+\alpha}}
\|k(\sigma-q)\|_{\cL(\cH)} \|u(q)\|_U\,d\sigma\,dq}_{s_2}\,,
\end{split}
\end{equation}
where $r=2/(2\gamma-1)$ and $\alpha\in (0,1-\gamma)$. 
We move on with the estimate of the second summnd $s_2$ in the right hand side, to find
\begin{equation} \label{e:bound4s2}
\begin{split}
s_2 &\le c (t-\tau)^\alpha \int_s^\tau\int_s^\sigma \|k(\sigma-q)\|_{\cL(\cH)} \|u(q)\|_U\,dq
\frac{1}{(\tau-\sigma)^{\gamma+\alpha}}\,d\sigma
\\[1mm]
&\le c (t-\tau)^\alpha \int_s^\tau\Big[\int_s^\sigma \|k(\sigma-q)\|_{\cL(\cH)}^2\,dq\Big]^{1/2}
\|u\|_{L^2(s,T;U)}\frac{1}{(\tau-\sigma)^{\gamma+\alpha}}\,d\sigma
\\[1mm]
&\le c (t-\tau)^\alpha \|k\|_{L^2(0,T;\cL(\cH))}\|u\|_{L^2(s,T;U)}\,.
\end{split}
\end{equation}
Once we have the estimate \eqref{e:bound4s2}, we return to \eqref{e:holderH}: thus, 
since  $1/2-1/r=1-\gamma$ while $\alpha<1-\gamma$, we have $\min\{1/2-1/r,\alpha\}=\alpha$ 
which establishes $H_su\in C^\alpha([s,T],\cH)$ (for any $\alpha <1-\gamma$).

\smallskip
\noindent
\textbf{2.} Similarly, starting from \eqref{e:cKs_1}, with $k\in L^2(0,T;\cL(\cH))$ and given $\eta\in L^2(0,s;U)$, we first find $\cK_s\eta\in L^\infty(0,s;\cH)$.
As before, the regularity can be actually improved to H\"older continuity: with $\alpha<1-\gamma$
we get
\begin{equation} \label{e:holderK}
\begin{split}
&\|\cK_s\eta(t)-\cK_s\eta(\tau)\|_{\cH} 
=\Big\|\int_0^s \big[\lambda(t,q,s)-\lambda(\tau,q,s)\big]\eta(q)\,dq\Big\|_\cH
\\[1mm]
& \qquad \le c \int_0^s\int_\tau^t \frac{1}{(t-\sigma)^\gamma}\|k(\sigma-q)\|_{\cL(H\c)} \|\eta(q)\|_U\,d\sigma\,dq
\\
& \qquad \qquad +c \int_0^s\int_s^\tau \frac{(t-\tau)^\gamma}{(t-\sigma)^\gamma (\tau-\sigma)^\gamma}
\|k(\sigma-q)\|_{\cL(\cH)} \|\eta(q)\|_U\,d\sigma\,dq
\\[1mm]
& \qquad \le c \int_\tau^t \frac{1}{(t-\sigma)^\gamma}\int_0^s\|k(\sigma-q)\|_{\cL(\cH)} \|\eta(q)\|_U\,dq\,d\sigma
\\
& \qquad \qquad +c \int_s^\tau \frac{(t-\tau)^\alpha}{(\tau-\sigma)^{\gamma+\alpha}}
\int_0^s\|k(\sigma-q)\|_{\cL(\cH)} \|\eta(q)\|_U\,dq\,d\sigma
\\[1mm]
& \qquad \le c \int_\tau^t \frac{1}{(t-\sigma)^\gamma}\,d\sigma \|k\|_{L^2(0,T;\cL(\cH)} \|\eta\|_{L^2(0,s;U)}
\\
&\qquad \qquad + c (t-\tau)^\alpha \int_s^\tau \frac{1}{(\tau-\sigma)^{\gamma+\alpha}}
\|k\|_{L^2(0,T;\cL(\cH)} \|\eta\|_{L^2(0,s;U)}
\\[1mm]
& \qquad \le c \big[(t-\tau)^{1-\gamma}+(t-\tau)^\alpha\big]\|k\|_{L^2(0,T;\cL(\cH)} \|\eta\|_{L^2(0,s;U)}\,.
\end{split}
\end{equation} 
As $\alpha<1-\gamma$, $\cK_s\eta\in C^\alpha([s,T],\cH)$.

\smallskip
\textbf{3.} 
The adjoint operator of $H_s$ is found computing -- for any pair $u(\cdot)\in L^2(s,T;U)$ and
$z(\cdot)\in L^2(s,T;\cH)$ -- the scalar product
\begin{equation*}
\begin{split}
\big(H_su,z\big)_{L^2(s,T;\cH)} 
&=\int_s^T\Big(\int_s^t e^{A(t-\sigma)}\int_s^\sigma k(\sigma-q)Bu(q)\,dq\,d\sigma,z(t)\Big)_{\cH}\,dt 
\\
&=\int_s^T\int_s^t\int_s^\sigma \big(u(q),B^* k(\sigma-q)^*e^{A^*(t-\sigma)}z(t)\big)_U\,dq\,d\sigma\,dt\,;
\end{split}
\end{equation*}
by using the Fubini-Tonelli Theorem we find
\begin{equation*}
\begin{split}
\big(H_su,z\big)_{L^2(s,T;\cH)}  
&= \int_s^T\int_s^t\int_q^t \big(u(q),B^* k(\sigma-q)^*e^{A^*(t-\sigma)}z(t)\big)_U\,d\sigma\,dq\,dt
\\
&= \int_s^T\int_q^T\int_q^t \big(u(q),B^* k(\sigma-q)^*e^{A^*(t-\sigma)}z(t)\big)_U\,d\sigma\,dt\,dq
\\
&= \int_s^T\Big(u(q),\int_q^T\int_q^t B^* k(\sigma-q)^*e^{A^*(t-\sigma)}z(t)\,d\sigma\,dt\Big)_U\,dq\,,
\end{split}
\end{equation*}
which establishes
\begin{equation*}
H_s^* z(q)=\int_q^T\int_q^t B^* k(\sigma-q)^*e^{A^*(t-\sigma)}z(t)\,d\sigma\,dt\,, \qquad q\in [s,T]\,.
\end{equation*}

Then, since owing to the Assumptions~\ref{a:ipo_0} $k^*$ commutes with $e^{A^*\cdot}$, we get
\begin{equation*}
\begin{split}
\|H_s^* z(q)\|_U 
&\le c\,\int_q^T\int_q^t \|k(\sigma-q)^*\|_{\cL(\cH)}\frac{1}{(t-\sigma)^\gamma}\|z(t)\|_U\,d\sigma\,dt
\\
& = c\,\int_q^T\Big[\int_q^t \frac{1}{(t-\sigma)^\gamma}\|k(\sigma-q)^*\|_{\cL(\cH)}\,d\sigma\Big]
\|z(t)\|_U\,dt\,
\\ 
&\le c
\left[\int_q^T\Big[\int_q^t \frac{1}{(t-\sigma)^\gamma}
\|k(\sigma-q)^*\|_{\cL(H)}\,d\sigma\Big]^2\,dt\right]^{1/2}
\Big[\int_q^T\|z(t)\|_U^2\,dt\Big]^{1/2}
\\[1mm]
&\le c\,\|k\|_{L^2(0,T;\cL(\cH))} \|z\|_{L^2(s,T;U)} \qquad \forall q\in [s,T]\,,
\end{split}
\end{equation*}
where in the last estimate we used once again \cite[Theorem~383]{hardy-etal}.
This established $H_s^* z\in L^\infty(s,T;U)$.
That this regularity (in time) can be enhanced to $H_s^* z\in C([s,T];U)$ can be shown in the absence
of particular challenges and hence the proof is omitted.

\textbf{4.}
It is readily seen that 
\begin{equation*}
\cK_s^* \xi(q)=\int_s^T\int_s^t B^* k(\sigma-q)^* e^{A^*(t-\sigma)}\xi(t)\,d\sigma\,dt\,, 
\qquad q\in [0,s]\,,
\end{equation*}
so that 
\begin{equation*}
\begin{split}
\|\cK_s^* \xi(q)\|_U 
&\le c\left(\int_s^T\Big[\int_s^t \frac{1}{(t-\sigma)^\gamma}
\|k(\sigma-q)^*\|_{\cL(\cH)}\,d\sigma\Big]^2\,dt\right)^{1/2}
\left(\int_s^T\|\xi(t\|_\cH^2\,dt\right)^{1/2}
\\[1mm]
&\le c\,\|k\|_{L^2(0,T;\cL(\cH))} \|\xi\|_{L^2(0,s;U)} \qquad \forall q\in [0,s]
\end{split}
\end{equation*}
and $\cK_s^* \xi\in L^\infty(0,s;U)$.
As above, the $L^\infty$-in time can be actually enhanced to $C^0$-in time regularity; the proof is omitted.
\end{proof}

On the basis of the regularity results \eqref{e:reg-H} and \eqref{e:reg-K} pertaining to the operators $H_s$ and $\cK_s$, combined with the first of the memberships \eqref{e:reg-L_s} for $L_s$, we see that the lower regularity (in time) of $L_s$ prevails. 


\begin{corollary} \label{c:mild-slns}
Let the Assumptions~\ref{a:ipo_0} on the operators $A$, $B$, $k(\cdot)$ hold true, with 
$\gamma>\frac{1}{2}$.
Then, the mild solutions $w(\cdot)$ to the Cauchy problems associated with the control system
\eqref{e:system-start} that correspond to initial data $X_0$ (at initial time $s\in [0,T)$) and 
control functions $u\in L^2(s,T;U)$, given by \eqref{e:mild-sln_s}, are such that 
\begin{equation} \label{e:reg-mild} 
w\in L^{\frac{2}{2\gamma-1}}(s,T;\cH)\,. 
\end{equation}

\end{corollary}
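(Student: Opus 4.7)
The approach is to split the mild solution \eqref{e:mild-sln_s} into its four constituent summands and verify that each belongs to $L^{r}(s,T;\cH)$ with $r := 2/(2\gamma-1)$. Since $\gamma \in (1/2,1)$ this exponent is finite, so on the bounded interval $[s,T]$ every contribution that is bounded in time will automatically land in $L^r$; only one of the four summands will actually force the integrability exponent down to the stated value.

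Three of the four pieces will be handled in a single stroke. The free evolution $t \mapsto e^{A(t-s)}w_0$ is continuous into $\cH$ by strong continuity of the semigroup, so it lies in $L^\infty(s,T;\cH)$. The two memory-generated summands $H_s u$ and $\cK_s \eta$ are H\"older continuous into $\cH$ by \eqref{e:reg-H} and \eqref{e:reg-K} of Proposition~\ref{e:regularity-newop}, and hence they too are bounded in time. All three are therefore in $L^r(s,T;\cH)$.

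The remaining term, the memoryless input-to-state contribution $L_s u$, is the one that dictates the final exponent. Rewriting $(L_s u)(t) = \int_s^t A^\gamma e^{A(t-r)}(A^{-\gamma}B)u(r)\,dr$ and using the analytic bound $\|A^\gamma e^{At}\|_{\cL(\cH)} \lesssim t^{-\gamma}$ together with $A^{-\gamma}B \in \cL(U,\cH)$, one obtains the pointwise estimate $\|L_s u(t)\|_\cH \lesssim \int_s^t (t-r)^{-\gamma}\|u(r)\|_U\,dr$, and a Hardy--Littlewood--Sobolev (or Young-type) inequality then yields $L_s u \in L^{2/(2\gamma-1)}(s,T;\cH)$. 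This is precisely the first of the memberships \eqref{e:reg-L_s} recalled in Proposition~\ref{p:Ls}, and is the sole place where the exponent $2/(2\gamma-1)$ genuinely appears.

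Summing the four contributions then delivers \eqref{e:reg-mild}. There is essentially no obstacle to overcome: the content of the corollary is the observation -- already anticipated in the paragraph preceding the statement -- that the weaker time regularity of $L_s u$ prevails over the continuous and H\"older-continuous behaviour of the other three summands. The only non-trivial ingredient is the quantitative parabolic input-to-state estimate, which is standard and is quoted from the appendix.
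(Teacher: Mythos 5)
Your proposal is correct and follows exactly the reasoning the paper intends: the corollary is stated right after the paragraph observing that the lower time-regularity of $L_s$ (the first membership in \eqref{e:reg-L_s}) prevails over the H\"older continuity of $H_s u$ and $\cK_s\eta$ from Proposition~\ref{e:regularity-newop} and the continuity of the free evolution. The paper gives no further proof, so your slightly more detailed write-up (including the standard derivation of the $L^{2/(2\gamma-1)}$ bound for $L_s u$) is the same argument, just made explicit.
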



\section{Towards the feedback formula. Proof of statements S1., S2.~and S3.}
\label{s:feedback}
In this section we retrace the major steps leading to the closed-loop representation of the optimal control in its final form \eqref{e:feedback_2}, that displays the very same building-blocks $P_i$ ($i=0,1,2$) of the quadratic form which yields the optimal value of the functional (see \eqref{e:optimal-cost_1}), thereby establishing the statements S2. and S3. of Theorem~\ref{t:main}.
First, we derive certain formulas for the optimal pair $(\hat{u}(t,s,X_0),\hat{w}(t,s,X_0))$,
which follow from the optimality condition; these lead in particular to establish the statement S1. of Theorem \ref{t:main}. 
Owing to the unboundedness of the control operator $B$, there are suitable regularity properties of the key operators $\psi_i$ and $Z_i$, $i=1,2$ (involved in these formulas) that will be needed later; thus,
they are discussed and pinpointed here in Lemma~\ref{l:regularity}.
Some of the proofs are essentially the same as the ones of analogous results in \cite[Section\,2]{ac-bu-JOTA} instead: when it will be that case, they will be omitted.
 
\subsection{The optimality condition.}
An easy computation provides a first rewriting of the cost functional; see also 
\cite[Section\,2.1]{ac-bu-JOTA}. 
This is a very first standard step in the theory of the LQ problem in the memoryless case, even in the presence of non coercive functionals.


\begin{lemma}
We make reference to the optimal control problem \eqref{e:mild-sln_s}-\eqref{e:cost_s}, under the standing Assumptions~\ref{a:ipo_0} and \eqref{e:ipo_1}.
Given $X_0\in Y_s$, the optimal cost admits the representation
\begin{equation} \label{e:q-form}
J_s(u,X_0)= \big(M_s X_0,X_0\big)_{Y_s} + 2 \text{Re}\, \big(N_s X_0,u\big)_{L^2(s,T;U)}
+ \big(\Lambda_s u,u\big)_{L^2(s,T;U)}\,,
\end{equation}
with
\begin{equation} \label{e:maiuscoli}
\begin{split}
\big(M_s X_0,X_0\big)_{Y_s}
&:=\int_s^T \big(C^*C E(t,s)X_0,E(t,s)X_0\big)_{\cH}\,dt
\\[1mm]
\big[N_s X_0\big](\cdot)&:=\big[\big(L_s^*+H_s^*\big) C^*C E(\cdot,s)X_0\big](\cdot)
\\[1mm]
\Lambda_s & :=I+\big(L_s^*+H_s^*\big)C^*C\big(L_s+H_s\big)\,,
\end{split}
\end{equation}
and where $I$ denotes the identity operator on $L^2(s,T;U)$.
The operator $\Lambda_s$ is boundedly invertible in $L^2(s,T;U)$, with $\|\Lambda_s^{-1}\|\le 1$ for all $s$. 
\end{lemma}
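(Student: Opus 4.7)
The plan is a direct computation. I start from the cost \eqref{e:cost_s} and substitute the mild solution \eqref{e:mild-sln_s}, split conveniently as
\begin{equation*}
w(t) = E(t,s)X_0 + \bigl[(L_s+H_s)u\bigr](t),\qquad
E(t,s)X_0 := e^{A(t-s)}w_0 + \cK_s\eta(t),
\end{equation*}
so that $E(\cdot,s)$ encodes the $X_0$-contribution alone (with the convention $\cK_0\equiv 0$ when $s=0$). Proposition~\ref{e:regularity-newop} gives $H_s u,\cK_s\eta\in C^\alpha([s,T],\cH)\subset L^2(s,T;\cH)$, while the regularity of $L_s$ recalled in Proposition~\ref{p:Ls} and the assumption on the semigroup yield $L_s u\in L^{2/(2\gamma-1)}(s,T;\cH)\subset L^2(s,T;\cH)$ on the bounded interval $[s,T]$. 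Hence both $E(\cdot,s)X_0$ and $(L_s+H_s)u$ live in $L^2(s,T;\cH)$, and the adjoints $L_s^*,H_s^*\in\cL(L^2(s,T;\cH),L^2(s,T;U))$ are well defined.

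Next I expand $\|Cw(t)\|_\cH^2$ in the integrand. Integrating over $[s,T]$ separates $J_s(u,X_0)$ into three contributions: the pure $X_0$-term $\int_s^T\|CE(t,s)X_0\|_\cH^2\,dt$, which is exactly $(M_sX_0,X_0)_{Y_s}$; the cross term
\begin{equation*}
2\,\textrm{Re}\int_s^T \bigl(C(L_s+H_s)u(t),\,CE(t,s)X_0\bigr)_\cH\,dt,
\end{equation*}
which by the definition of the adjoint and Fubini becomes $2\,\textrm{Re}\,(N_sX_0,u)_{L^2(s,T;U)}$ with $N_s=(L_s^*+H_s^*)C^*CE(\cdot,s)$; and the $u$-quadratic term $\int_s^T\|C(L_s+H_s)u(t)\|_\cH^2\,dt$, which together with the remaining $\int_s^T\|u(t)\|_U^2\,dt=(u,u)_{L^2(s,T;U)}$ assembles into $(\Lambda_s u,u)_{L^2(s,T;U)}$ with $\Lambda_s = I+(L_s^*+H_s^*)C^*C(L_s+H_s)$. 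All manipulations are justified by the $L^2$-membership recalled above and the boundedness of $C$.

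For the last assertion, set $T_s := C(L_s+H_s)\in \cL(L^2(s,T;U),L^2(s,T;\cH))$, so $\Lambda_s = I + T_s^*T_s$ is self-adjoint and satisfies
\begin{equation*}
(\Lambda_s u,u)_{L^2(s,T;U)} = \|u\|_{L^2(s,T;U)}^2 + \|T_s u\|_{L^2(s,T;\cH)}^2 \ge \|u\|_{L^2(s,T;U)}^2.
\end{equation*}
The Lax--Milgram theorem (or direct spectral argument for a non-negative self-adjoint operator bounded below by $I$) yields a bounded inverse $\Lambda_s^{-1}$ with $\|\Lambda_s^{-1}\|_{\cL(L^2(s,T;U))}\le 1$, uniformly in $s\in[0,T)$. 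The only genuine point of care is checking the regularity needed to make the adjoint manipulations legitimate; this is precisely what Proposition~\ref{e:regularity-newop} and Proposition~\ref{p:Ls} supply.
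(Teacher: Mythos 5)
Your proof is correct and follows exactly the route the paper intends: the paper itself dismisses this lemma as "an easy computation" (deferring to the analogous expansion in the earlier work), and your expansion of $\|Cw(t)\|_\cH^2$ after splitting $w = E(\cdot,s)X_0 + (L_s+H_s)u$, together with the $I + T_s^*T_s \ge I$ argument for invertibility, is precisely that computation. The regularity checks you insert (via Proposition~\ref{p:Ls} and Proposition~\ref{e:regularity-newop}, using $\gamma>1/2$ so that $2/(2\gamma-1)>2$) are the right ones to legitimize the adjoint manipulations in the unbounded-$B$ setting.
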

Having introduced the operators $M_s$, $N_s$, $\Lambda_s$ above, the optimality condition 
\begin{equation*}
\Lambda_s \hat{u}+N_s X_0=0
\end{equation*}
brings about the following result.


\begin{proposition}
We make reference to the optimal control problem \eqref{e:mild-sln_s}-\eqref{e:cost_s}, under the standing Assumptions~\ref{a:ipo_0} and \eqref{e:ipo_1}.
Given $X_0\in Y_s$, the unique optimal pair $(\hat{u}(t,s,X_0),\hat{w}(t,s,X_0))$ admits the following
representation:
\begin{equation} \label{e:opt-control}
\begin{split}
\hat{u}(t,s,X_0)&= \psi_1(t,s)w_0+ \int_0^s \psi_2(t,p,s)\eta(p)\,dp\,,
\\
\hat{w}(t,s,X_0) &=Z_1(t,s)w_0+ \int_0^s Z_2(t,p,s)\eta(p)\,dp\,,
\end{split}
\end{equation}
where we have set
\begin{align}
\psi_1(t,s)&:= -\Lambda_s^{-1} \big[\big(L_s^*+H_s^*\big) C^*C e^{A(\cdot-s)}\big](t)
\label{e:psi_1} 
\\[1mm]
\psi_2(t,p,s)&:= -\big[\Lambda_s^{-1} \big(L_s^*+H_s^*\big) C^*C \lambda(\cdot,p,s)\big](t)
\label{e:psi_2}
\\[2mm]
Z_1(t,s)&:= e^{A(t-s)}+\big[\big(L_s+H_s\big)\psi_1(\cdot,s)\big](t)\,,
\label{e:Z_1}
\\[1mm]
Z_2(t,p,s)&:= \lambda(t,p,s)+[(L_s+H_s)\psi_2(\cdot,p,s)](t).
\label{e:Z_2}
\end{align}

\end{proposition}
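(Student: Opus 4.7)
The plan is to exploit the optimality condition $\Lambda_s \hat{u} + N_s X_0 = 0$, which is the vanishing of the first variation of the quadratic form \eqref{e:q-form}; since $\Lambda_s$ is boundedly invertible on $L^2(s,T;U)$ with $\|\Lambda_s^{-1}\|\le 1$, both existence and uniqueness of $\hat u$ are immediate, and
\[ \hat u = -\Lambda_s^{-1} N_s X_0. \]
The strategy is then to write $N_s X_0$ explicitly as the sum of a term linear in $w_0$ and an $L^2(0,s;U)$-integral in $\eta$, and to push the operator $\Lambda_s^{-1}(L_s^*+H_s^*)C^*C$ past this inner integral.

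First I would unfold the free evolution. Since the mild solution \eqref{e:mild-sln_s} corresponding to $u\equiv 0$ reads $e^{A(t-s)}w_0+\cK_s\eta(t)$, and \eqref{e:cKs_1} gives $\cK_s\eta(t)=\int_0^s\lambda(t,p,s)\eta(p)\,dp$, we have
\[ E(t,s)X_0 = e^{A(t-s)}w_0 + \int_0^s\lambda(t,p,s)\eta(p)\,dp. \]
Substituting in $N_sX_0 = (L_s^*+H_s^*)C^*C\,E(\cdot,s)X_0$ and using linearity of $C^*C$, $L_s^*$, $H_s^*$ yields two pieces. On the $w_0$-piece there is nothing further to do: applying $-\Lambda_s^{-1}$ gives precisely $\psi_1(t,s)w_0$ with $\psi_1$ as in \eqref{e:psi_1}.

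The main obstacle lies in the $\eta$-piece, where one has to interchange the integral $\int_0^s(\cdot)\eta(p)\,dp$ with the bounded linear operator $\Lambda_s^{-1}(L_s^*+H_s^*)C^*C$ acting in the $t$-variable on $p\mapsto C^*C\lambda(\cdot,p,s)\eta(p)$. This is a standard Bochner--Fubini exchange, but justifying it in the present setting of an \emph{unbounded} $B$ requires precisely the estimates just established: strong measurability in $p$ follows from the continuity bound \eqref{e:diff_lambda2} of Lemma~\ref{e:reg_lambda}, $L^1(0,s)$-integrability from the norm estimates \eqref{e:stima1}--\eqref{e:stima2} of Lemma~\ref{l:lemma1}, and the outer operator is bounded thanks to $C\in\cL(\cH)$, the mapping properties of $L_s^*$ (Proposition~\ref{p:Ls}) and of $H_s^*$ (Proposition~\ref{e:regularity-newop}), together with $\|\Lambda_s^{-1}\|\le 1$. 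Performing the exchange and comparing with \eqref{e:psi_2} yields the announced representation of $\hat u$.

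Finally, to obtain the representation of $\hat w$, insert the derived formula for $\hat u$ into the mild solution \eqref{e:mild-sln_s}: a second Fubini-type interchange — this time of $L_s+H_s$ with the $dp$-integral, justified by the regularity \eqref{e:reg-H} of $H_s$ and the analogous property of $L_s$ (Proposition~\ref{p:Ls}) — collects the terms proportional to $w_0$ into $Z_1(t,s)=e^{A(t-s)}+[(L_s+H_s)\psi_1(\cdot,s)](t)$ and those proportional to $\eta(p)$ into $Z_2(t,p,s)=\lambda(t,p,s)+[(L_s+H_s)\psi_2(\cdot,p,s)](t)$, matching \eqref{e:Z_1}--\eqref{e:Z_2}.
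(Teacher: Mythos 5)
Your proposal is correct and follows the same route the paper intends: the optimality condition $\Lambda_s\hat u+N_sX_0=0$ together with the bounded invertibility of $\Lambda_s$, the decomposition of the free evolution $E(t,s)X_0=e^{A(t-s)}w_0+\int_0^s\lambda(t,p,s)\eta(p)\,dp$, and a Fubini-type interchange justified by Lemma~\ref{l:lemma1}, Lemma~\ref{e:reg_lambda} and Propositions~\ref{p:Ls} and~\ref{e:regularity-newop}. The paper omits the details (deferring to the analogous argument in \cite{ac-bu-JOTA}), and your justification of the interchanges in the unbounded-$B$ setting is exactly the point that needs care; nothing is missing.
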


In the following lemma we pinpoint the regularity of the operators $Z_1(t,s)$ and $Z_2(t,p,s)$ with respect to the first variable. 

\begin{lemma} \label{l:regularity}
Let $Z_1(t,s)$ and $Z_2(t,p,s)$ be the operators defined in \eqref{e:Z_1} and \eqref{e:Z_2}, respectively.
Then, we have
\begin{equation} \label{e:Z-reg}
Z_1(\cdot,s)\in C([s,T],\cL(\cH))\,,
\quad
Z_2(\cdot,p,s)\in L^{2/(2\gamma-1)}(s,T;\cL(U,\cH))\,. 
\end{equation} 
\end{lemma}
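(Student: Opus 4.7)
The plan is to derive the stated regularity of $Z_1$ and $Z_2$ from a finer analysis of the auxiliary operators $\psi_1$ and $\psi_2$, which are themselves the images under $\Lambda_s^{-1}$ of continuous, resp.\ $L^r$-regular, quantities. Because each $Z_i$ is a ``forcing term plus input-to-state contribution'', the regularity will propagate through $L_s+H_s$ once the regularity of the $\psi_i$ is established.

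First I would analyze $\psi_1(\cdot,s)w_0$ for fixed $w_0\in\cH$. Strong continuity of the analytic semigroup gives $C^*C\,e^{A(\cdot-s)}w_0\in C([s,T],\cH)$; by Proposition~\ref{e:regularity-newop} (see \eqref{e:reg-H*}) and the analogous property of $L_s^*$ recalled in Proposition~\ref{p:Ls}, the operator $L_s^*+H_s^*$ maps this into $C([s,T],U)$. The delicate step is to show that $\Lambda_s^{-1}$ preserves $C([s,T],U)$. I would exploit the bootstrap identity $g+K_sg=f$, where $K_s:=(L_s^*+H_s^*)C^*C(L_s+H_s)$: since $g\in L^2(s,T;U)$ by bounded invertibility, $(L_s+H_s)g\in L^2(s,T;\cH)$ by \eqref{e:reg-H} and the standard parabolic regularity of $L_s$; hence $K_sg\in C([s,T],U)$ again by \eqref{e:reg-H*} and the $L_s^*$ analog, and $g=f-K_sg\in C([s,T],U)$ whenever $f$ is. Applied here, this yields $\psi_1(\cdot,s)w_0\in C([s,T],U)$. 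For $\psi_2(\cdot,p,s)u$ with $u\in U$, the same scheme starts from $\lambda(\cdot,p,s)u$, which by \eqref{e:stima1} lies in $L^r(s,T;\cH)\subset L^2(s,T;\cH)$ (since $r=2/(2\gamma-1)>2$ for $\gamma\in(1/2,1)$); we again land in $C([s,T],U)$.

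With these regularities in hand, the claims follow by inserting into \eqref{e:Z_1}--\eqref{e:Z_2}. For $Z_1$: the free piece $e^{A(\cdot-s)}w_0$ is in $C([s,T],\cH)$, while $L_s\psi_1(\cdot,s)w_0\in C([s,T],\cH)$ by Proposition~\ref{p:Ls} (the continuous function $\psi_1(\cdot,s)w_0$ belongs to every $L^p$), and $H_s\psi_1(\cdot,s)w_0\in C^\alpha([s,T],\cH)$ by \eqref{e:reg-H}; the sum is in $C([s,T],\cH)$ uniformly in $\|w_0\|_{\cH}$, which is the stated $Z_1(\cdot,s)\in C([s,T],\cL(\cH))$ in the strong operator sense. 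For $Z_2$: the leading term $\lambda(\cdot,p,s)$ already lies in $L^r(s,T;\cL(U,\cH))$ by \eqref{e:stima1}, while $(L_s+H_s)\psi_2(\cdot,p,s)$ is continuous (hence $L^r$) by the previous paragraph; the weaker $L^r$ integrability prevails, giving $Z_2(\cdot,p,s)\in L^{2/(2\gamma-1)}(s,T;\cL(U,\cH))$.

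The main obstacle is precisely the invariance of $C([s,T],U)$ under $\Lambda_s^{-1}$: a priori this inverse is only bounded on $L^2(s,T;U)$, and continuity must be bootstrapped through $g=f-K_sg$. This requires a careful match of the $L^2\to L^2$ mapping of $L_s+H_s$ (with values in $\cH$) with the $L^2\to C$ smoothing of $L_s^*+H_s^*$ (with values in $U$); any loss of integrability along this chain would collapse the argument.
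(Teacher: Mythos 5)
Your overall architecture (establish the regularity of $\psi_1,\psi_2$ first, then push it through $L_s+H_s$ into $Z_1,Z_2$) is the same as the paper's, but a key quantitative step is wrong. You claim that $(L_s^*+H_s^*)$ maps $L^2(s,T;\cH)$ into $C([s,T],U)$, citing \eqref{e:reg-H*} ``and the $L_s^*$ analog''. The analog does not hold: under the standing assumption $\gamma>\tfrac12$ we have $2<\tfrac{1}{1-\gamma}$, so by \eqref{e:L*} the map $L_s^*$ sends $L^2(s,T;\cH)$ only into $L^{2/(2\gamma-1)}(s,T;U)$, not into $C([s,T],U)$; the $L^2\to C$ smoothing is a special feature of $H_s^*$ alone. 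This breaks your one-step bootstrap for $\Lambda_s^{-1}$ (after one application of $K_s$ you gain only $2(1-\gamma)$ in the integrability exponent $1/p$, so you must iterate finitely many times before reaching continuity), and it makes your claim $\psi_2(\cdot,p,s)u\in C([s,T],U)$ false precisely in the range $\gamma\ge 3/4$ --- which is the range the paper singles out as the motivating one (Dirichlet boundary control of the heat equation, $\gamma=3/4+\epsilon$). Starting from $\lambda(\cdot,p,s)\in L^{r}$ with $r=2/(2\gamma-1)$, one lands in $C$ only when $r>\tfrac{1}{1-\gamma}$, i.e.\ $\gamma<3/4$; for $\gamma>3/4$ the paper correctly records $\psi_2(\cdot,p,s)\in L^{2/(4\gamma-3)}$ and then $(L_s+H_s)\psi_2(\cdot,p,s)\in L^{2/(6\gamma-5)}$ for $\gamma>5/6$.

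The stated conclusion is not endangered by this --- since $2/(6\gamma-5)>2/(2\gamma-1)$ on a bounded interval, the corrected regularity of $(L_s+H_s)\psi_2$ still embeds into $L^{2/(2\gamma-1)}$, and for $Z_1$ the iterated bootstrap does eventually yield $\psi_1(\cdot,s)\in C([s,T],\cL(\cH,U))$ --- but as written your proof asserts continuity where only a graded $L^p$ scale is available, and the case split on $\gamma$ (at $3/4$ and $5/6$) that the paper performs is exactly the bookkeeping your argument is missing. You should replace the single-step ``lands in $C([s,T],U)$'' claims by the exponent arithmetic of \eqref{e:L*} and \eqref{e:reg-L_s-general}, iterating until the exponent condition $p>\tfrac{1}{1-\gamma}$ is met (for $\psi_1$) or stopping at the weakest space that still embeds into $L^{2/(2\gamma-1)}$ (for $\psi_2$ and $Z_2$).
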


\begin{proof}
From the definition \eqref{e:Z_1} of $Z_1$ we see that its regularity strictly depends on the one of 
$\psi_1(\cdot,s)$, so we discuss this one first.
Starting from the definition \eqref{e:psi_1} of $\psi_1$, we recall the well known property 
\begin{equation*}
L_s^*\in \cL(L^\infty(s,T;\cL(\cH)),C([s,T],\cL(U)))
\end{equation*}
recorded in the Appendix (see \eqref{e:L*}), along with the regularity pertaining to $H_s^*$ estabished in Proposition~\ref{e:regularity-newop} to infer $\psi_1(\cdot,s)\in C([s,T],\cL(U,\cH))$.
Then, in view of Proposition~\ref{p:Ls} and Proposition~\ref{e:regularity-newop} the very same membership for $Z_1(\cdot,s)$ holds true.

Similarly, we start from the definition of $Z_2(\cdot,p,s)$ and see that its regularity is determined by the ones of $\lambda(\cdot,p,s)$ and of $\psi_2(\cdot,p,s)$.
In this case we combine $\lambda(\cdot,p,s)\in L^{2/(2\gamma-1)}(s,T;\cL(U,\cH))$ -- that is  the regularity result \eqref{e:stima1} of Lemma~\ref{l:lemma1} --  with 
\begin{equation*}
\psi_2(\cdot,p,s)\in 
\begin{cases}
L^{\frac{2}{4\gamma-3}}(s,T;\cL(U,\cH)) &  \quad \gamma > 3/4 
\\
L^q(s,t;\cL(U,\cH)) \; \forall q<\infty &   \quad \gamma=3/4 
\\
C([s,T],\cL(U,\cH)) &  \quad \gamma < 3/4 
\end{cases}
\end{equation*}
-- coming from \eqref{e:L*} -- to find that
\begin{equation*}
\big(L_s+H_s\big) \psi_2(\cdot,p,s)\in \begin{cases}
L^{\frac{2}{6\gamma-5}}(s,T;\cL(U,\cH)) & \quad \gamma>5/6 \\
L^q(s,T;\cL(U,\cH)) \; \forall q<\infty & \quad  \gamma=5/6 \\
C([s,T],\cL(U,\cH)) & \quad \gamma < 5/6. 
\end{cases}
\end{equation*}

The above shows that $Z_2(\cdot,p,s)$ inherits the very same regularity of $\lambda(\cdot,p,s)$, 
i.e. the latter membership in \eqref{e:Z-reg}, thus concluding the proof.
\end{proof} 

In view of Lemma~\ref{l:regularity}, we infer the statememt S1. of Theorem~\ref{t:main}. 

\begin{corollary} \label{c:reg_coppia} 
Under the standing Assumption~\ref{a:ipo_0} and \eqref{e:ipo_1}, for every $X_0\in Y_s$ 
the we have the regularity in time of the optimal control $\hat{u}(t,s,X_0)$ and the component
$\hat{w}(t,s,X_0)$ asserted in \eqref{e:better-reg}. 
\end{corollary}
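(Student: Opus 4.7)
The plan is to leverage the representation \eqref{e:opt-control} together with Lemma~\ref{l:regularity}. The $w_0$-terms are immediate: $t\mapsto \psi_1(t,s)w_0$ lies in $C([s,T],U)$ because the proof of Lemma~\ref{l:regularity} establishes $\psi_1(\cdot,s)\in C([s,T],\cL(\cH,U))$, and $t\mapsto Z_1(t,s)w_0$ lies in $C([s,T],\cH)$ by \eqref{e:Z-reg}.

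The $\eta$-dependent integrals require more care, because the pointwise-in-$p$ information supplied by Lemma~\ref{l:regularity} only gives $L^r$-integrability in $t$. Here I would use Fubini to recombine them: from the definition \eqref{e:psi_2},
\begin{equation*}
\int_0^s \psi_2(t,p,s)\eta(p)\,dp = -\big[\Lambda_s^{-1}(L_s^*+H_s^*)C^*C\,\cK_s\eta\big](t),
\end{equation*}
so that the optimality condition collapses to the compact form $\hat{u} = -\Lambda_s^{-1}N_s X_0$ with $N_s X_0 = (L_s^*+H_s^*)C^*C\,E(\cdot,s)X_0$ and $E(\cdot,s)X_0 := e^{A(\cdot-s)}w_0 + \cK_s\eta(\cdot)$. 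By analyticity of the semigroup and Proposition~\ref{e:regularity-newop}, $E(\cdot,s)X_0\in C([s,T],\cH)$; then, arguing exactly as in the proof of Lemma~\ref{l:regularity} for $\psi_1$, one obtains $N_s X_0\in C([s,T],U)$.

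The technical heart of the argument is to transfer this continuity through $\Lambda_s^{-1}$. I would proceed by bootstrap: setting $M:=(L_s^*+H_s^*)C^*C(L_s+H_s)$ so that $\Lambda_s=I+M$, iteration of $\hat{u}=-N_s X_0-M\hat{u}$ yields
\begin{equation*}
\hat{u} = \sum_{k=0}^{n}(-M)^k(-N_s X_0) + (-M)^{n+1}\hat{u}.
\end{equation*}
A single application of $M$ improves integrability: because $L_s$ and $L_s^*$ each correspond to fractional integration of order $1-\gamma$ (via the kernel bound $\|B^*e^{A^*r}\|\lesssim r^{-\gamma}$), whereas $H_s,H_s^*$ contribute strictly smoother regularity by Proposition~\ref{e:regularity-newop}, one has $M\colon L^p(s,T;U)\to L^q(s,T;U)$ with $1/q=1/p-2(1-\gamma)$ as long as the right-hand side is positive, and $M\colon L^p\to C([s,T],U)$ once it becomes non-positive. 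Starting from $\hat{u}\in L^2(s,T;U)$ and choosing $n=\lceil 1/(4(1-\gamma))\rceil$ makes the tail term continuous; meanwhile each summand $(-M)^k(-N_s X_0)$ is continuous because $M$ preserves $C([s,T],U)$ (the maps $L_s+H_s\colon L^\infty\to C([s,T],\cH)$ and $L_s^*+H_s^*\colon L^\infty\to C([s,T],U)$ both follow, by dominated convergence, from $\gamma<1$). Hence $\hat{u}(\cdot,s,X_0)\in C([s,T],U)$.

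The main obstacle I anticipate is the bookkeeping of this bootstrap uniformly in $\gamma\in (1/2,1)$, in particular verifying that $H_s,H_s^*$ never bind the exponent, that the $1/p$-arithmetic remains valid at the step where the index crosses zero, and that the Fubini-based recombination is rigorous in the operator-valued setting. Once $\hat{u}\in C([s,T],U)$ is in hand, the conclusion $\hat{w}(\cdot,s,X_0)\in C([s,T],\cH)$ is read off \eqref{e:mild-sln_s}: $e^{A(\cdot-s)}w_0$ and $\cK_s\eta$ are continuous by (respectively) analyticity and Proposition~\ref{e:regularity-newop}, while $(L_s+H_s)\hat{u}\in C([s,T],\cH)$ since $\hat{u}\in L^\infty$ and the semigroup kernel is integrable.
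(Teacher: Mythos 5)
Your proposal is correct, but for the delicate part --- continuity in $t$ of the $\eta$-dependent term of $\hat u$ --- it takes a genuinely different route from the paper's. The paper stays with the pointwise representation $\hat u(t)=\psi_1(t,s)w_0+\int_0^s\psi_2(t,p,s)\eta(p)\,dp$ and reads the continuity off the increment estimates already prepared in Section~\ref{s:regularity}: Lemma~\ref{e:reg_lambda}(i) gives $\|\lambda(t,\cdot,s)-\lambda(\tau,\cdot,s)\|_{L^2(0,s;\cL(U,\cH))}\le C(t-\tau)^\theta$, which is then transferred to the kernels $\psi_2$ and $Z_2$, so no further analysis of $\Lambda_s^{-1}$ is needed. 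You instead recombine the integral via Fubini into $-\bigl[\Lambda_s^{-1}(L_s^*+H_s^*)C^*C\,\cK_s\eta\bigr](t)$, observe that $N_sX_0\in C([s,T],U)$ (using Proposition~\ref{e:regularity-newop} and \eqref{e:L*}), and push the continuity through $\Lambda_s^{-1}=(I+M)^{-1}$ by a finite Neumann/bootstrap iteration that exploits the $2(1-\gamma)$-smoothing of each application of $M$. Both arguments are sound. Yours buys independence from the H\"older increment estimates in the $t$-variable, at the price of the exponent bookkeeping you flag --- which does work out, including the borderline step where $1/p-2k(1-\gamma)=0$ lands you only in $\bigcap_{q<\infty}L^q(s,T;U)$ and one further application of $M$ is needed to reach $C([s,T],U)$; the paper's argument is shorter because the increment estimates were needed anyway for Lemma~\ref{l:increm_Z} and Proposition~\ref{p:riccati-ops-continui}. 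Your treatment of the $w_0$-terms and of $\hat w(\cdot,s,X_0)$ coincides with the paper's.
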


\begin{proof}  
We use the first of the representation formulas \eqref{e:opt-control}: as shown in the proof of Lemma~\ref{l:regularity}, we have $\psi_1(\cdot,s)\in C([s,T],\cL(U,\cH))$; 
moreover, by \eqref{e:psi_2} with $\theta\in (0,1-\gamma)$,
\begin{equation*} 
\|Z_2(t,\cdot,s)-Z_2(\tau,\cdot,s)\|_{L^2(0,s;\cL(U,\cH))} \le C\|\lambda(t,\cdot,s)-\lambda(\tau,\cdot,s)\|_{L^2(0,s;\cL(U,\cH))} \le C(t-\tau)^\theta\,,
\end{equation*}
which implies readily $\hat{u}(\cdot,s,X_0)\in C([s,T],U)$. 

Next, using the basic relation \eqref{e:mild-sln_s}, as well as \eqref{e:reg-K}, we immediately obtain
\begin{equation*}
\hat{w}(\cdot,s,X_0) = e^{A(\cdot-s)}w_0 + [(L_s+H_s)\hat{u}(\cdot,s,X_0)]+\cK_s\eta \in C([s,T],\cH).
\end{equation*}
\end{proof}

We need now appropriate estimates of the increments of $Z_1$ and $Z_2$ with respect to their other variables. 
\begin{lemma} \label{l:increm_Z}
Let $Z_1(t,s)$ and $Z_2(t,p,s)$ be the operators defined in \eqref{e:Z_1} and \eqref{e:Z_2}, respectively.
Then, we have for $t\ge\sigma\ge s$
\begin{equation} \label{e:Z-reg1}
\|Z_1(t,\sigma)-Z_1(t,s)\|_{\cL(\cH)} \le \omega(\sigma-s);
\end{equation}
furthermore, for $s\ge p >q$
\begin{equation} \label{e:Z-reg21}
\|Z_2(\cdot,p,s)-Z_2(\cdot,q,s)\|_{L^2(s,T;\cL(U))}\le \omega(p-q),
\end{equation}
and for $s >r\ge p$
\begin{equation}\label{e:Z-reg22}
\|Z_2(\cdot,p,s)-Z_2(\cdot,p,r)\|_{L^2(s,T;\cL(U))}\le \omega(s-r);
\end{equation}
here, $\omega$ is a continuous, positive function on $(0,T]$, such that 
$\omega(\tau)\to 0$ as $\tau \to 0$.  
\end{lemma}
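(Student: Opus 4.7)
The plan is to handle each of the three increments by unfolding the definitions \eqref{e:Z_1}--\eqref{e:Z_2}, separating a ``kernel'' contribution ($e^{A(t-\cdot)}$ or $\lambda(t,\cdot,\cdot)$) from a ``forced'' contribution $(L+H)\psi_i$, and then unpacking $\psi_i$ through \eqref{e:psi_1}--\eqref{e:psi_2} so that every forced piece is reduced to another kernel increment already controlled by Lemma~\ref{e:reg_lambda} or by the regularity of the semigroup. The uniform-in-$s$ bound $\|\Lambda_s^{-1}\|\le 1$, together with the norm estimates of Proposition~\ref{p:Ls} and Proposition~\ref{e:regularity-newop}, will let one combine everything into a single modulus $\omega$ by taking suprema over the various contributions.

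For \eqref{e:Z-reg1}, with $t\ge\sigma\ge s$, I would write
\[
Z_1(t,\sigma)-Z_1(t,s)=\bigl[e^{A(t-\sigma)}-e^{A(t-s)}\bigr]+\bigl\{(L_\sigma+H_\sigma)\psi_1(\cdot,\sigma)-(L_s+H_s)\psi_1(\cdot,s)\bigr\}(t),
\]
and split the second bracket as $(L_\sigma+H_\sigma)[\psi_1(\cdot,\sigma)-\psi_1(\cdot,s)] + [(L_\sigma+H_\sigma)-(L_s+H_s)]\psi_1(\cdot,s)$. The operator difference $(L_\sigma+H_\sigma)-(L_s+H_s)$ is a pure truncation on the strip $(s,\sigma)$ of length $\sigma-s$, yielding a small contribution via Proposition~\ref{p:Ls} and \eqref{e:reg-H}; while $\psi_1(\cdot,\sigma)-\psi_1(\cdot,s)$ reduces by \eqref{e:psi_1} to the same type of semigroup increment, closing a bootstrap.

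For \eqref{e:Z-reg21} and \eqref{e:Z-reg22} the same template applies on $Z_2$. The kernel contribution is controlled directly by Lemma~\ref{e:reg_lambda}(ii) or (iii), yielding moduli $\omega_1(p-q)=C\|k(\cdot)-k(\cdot+p-q)\|_{L^2(0,T-p;\cL(\cH))}$ (which tends to zero by $L^2$-continuity of translation) and $C(s-r)^{1-\gamma}$, respectively; and the forced contribution reduces through \eqref{e:psi_2} and the uniform boundedness of $\Lambda_s^{-1}$, $(L_s^*+H_s^*)C^*C$ and $(L_s+H_s)$ back to the very same $\lambda$-increments. In \eqref{e:Z-reg22} there is the extra effect coming from the change of integration domain of $L_s,H_s$ with respect to $L_r,H_r$, once again handled by splitting off the strip $(r,s)$ and invoking Proposition~\ref{e:regularity-newop}.

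The main obstacle is the first step of the $Z_1$ argument: for an analytic semigroup with unbounded generator, $\|e^{A(t-\sigma)}-e^{A(t-s)}\|_{\cL(\cH)}$ is not a modulus of $\sigma-s$ uniformly in $t\ge\sigma$ (it fails precisely at $t=\sigma$). The gain has to come from the fact that every appearance of a raw semigroup increment in the decomposition is subsequently paired with the smoothing factors $C^*C$, $L_s^*+H_s^*$ and $L_s+H_s$ supplied by Proposition~\ref{e:regularity-newop}; making this smoothing quantitative and then closing the bootstrap induced by \eqref{e:psi_1} while preserving a single uniform modulus $\omega$ is the delicate part of the proof.
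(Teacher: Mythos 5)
Your treatment of \eqref{e:Z-reg21} and \eqref{e:Z-reg22} is essentially the paper's proof. The paper also writes $Z_2(\cdot,p,s)-Z_2(\cdot,q,s)$ as the $\lambda$-increment plus the uniformly bounded composite $(L_s+H_s)\Lambda_s^{-1}(L_s^*+H_s^*)C^*C$ applied to that same increment, and invokes \eqref{e:diff_lambda2}; for \eqref{e:Z-reg22} it isolates $s_1=\|\lambda(\cdot,p,s)-\lambda(\cdot,p,r)\|$, a second term comparable to $s_1$, and a third term in which $(L_s+H_s)\Lambda_s^{-1}-(L_r+H_r)\Lambda_r^{-1}$ is split into a $\Lambda_r-\Lambda_s$ contribution and an $(L_s+H_s)-(L_r+H_r)$ contribution --- exactly your ``strip $(r,s)$'' argument. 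Note that no bootstrap is needed anywhere here: the $p$- (resp.\ $s$-) dependence of $\psi_2$ sits entirely in the single factor $\lambda(\cdot,p,s)$, so one application of \eqref{e:diff_lambda2}--\eqref{e:diff_lambda3} closes the estimate.

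The gap is in \eqref{e:Z-reg1}, and you have put your finger on it without resolving it. The paper disposes of \eqref{e:Z-reg1} in one sentence (``all the operators involved depend continuously on $s$''), but the first term of your decomposition, $e^{A(t-\sigma)}-e^{A(t-s)}=e^{A(t-\sigma)}\bigl(I-e^{A(\sigma-s)}\bigr)$, enters $Z_1$ \emph{bare}: it is not composed with $C^*C$, $L_s^*+H_s^*$ or any other smoothing factor, so the rescue you propose in your last paragraph cannot apply to it. At $t=\sigma$ this term is $I-e^{A(\sigma-s)}$, whose $\cL(\cH)$-norm does not tend to $0$ as $\sigma-s\to 0$ when $A$ is unbounded (for a negative self-adjoint unbounded $A$ it equals $1$ for every $\sigma>s$), while the remaining pieces $\bigl[(L_\sigma+H_\sigma)\psi_1(\cdot,\sigma)-(L_s+H_s)\psi_1(\cdot,s)\bigr](t)$ are genuinely $O((\sigma-s)^{1-\gamma})$ and cannot cancel it. Hence a uniform-in-$t$ operator-norm modulus $\omega(\sigma-s)$ is not obtainable along this route; what one does get, and what is actually used downstream (e.g.\ under the $d\sigma$-integral in \eqref{e:P_0_v2} in the proof of Proposition~\ref{p:riccati-ops-continui}), is strong continuity in $s$, or the off-diagonal bound $\|e^{A(t-\sigma)}-e^{A(t-s)}\|_{\cL(\cH)}\le C(\sigma-s)/(t-\sigma)$, or an integrated-in-$t$ version. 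You should prove one of these weaker statements instead; as it stands your argument --- like the paper's one-line justification --- does not establish \eqref{e:Z-reg1} in the stated uniform operator norm.
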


\begin{proof}
The proof of \eqref{e:Z-reg1} is a straightforward consequence of the 
definition \eqref{e:Z_1} of $Z_1$ and of the fact that all the operators involved in its definition clearly depend continuously on $s$.\\
Concerning \eqref{e:Z-reg21}, starting from \eqref{e:Z_2}, for $t\ge s\ge r\ge p$ we have
\begin{equation}
\begin{split}
& \|Z_2(t,p,s)-Z_2(t,q,s)\|_{L^2(s,T;\cL(U,\cH))} \le \|\lambda(t,p,s)-\lambda(t,q,s)\|_{L^2(s,T;\cL(U,\cH))}
\\
& \qquad +\Big\|\Big[\big(L_s+H_s\big)\Lambda_s^{-1}\big(L_s^*+H_s^*\big) C^*C \big[\lambda(\cdot,p,s)-\lambda(\cdot,q,s)\big]\Big]\Big\|_{L^2(s,T;\cL(U,\cH))} 
\\
& \myspace \le C\big\|\lambda(\cdot,p,s)-\lambda(\cdot,q,s)\big\|_{L^2(s,T;\cL(U,\cH))}. 
\end{split}
\end{equation}
By \eqref{e:diff_lambda2}, we establish \eqref{e:Z-reg21}. \\
To prove the estimate \eqref{e:Z-reg22}, we compute for $t\ge s >r\ge p$:
\begin{equation*}
\begin{split}
& Z_2(t,p,s)-Z_2(t,p,r) = \big[\lambda(t,p,s)-\lambda(t,p,r)\big] 
\\
& \qquad + \big[\big(L_s+H_s\big)\Lambda_s^{-1}\big(L_s^*+H_s^*\big) C^*C \lambda(\cdot,p,s)\big](t)
\\
& \qquad -\big[\big(L_r+H_r\big)\Lambda_r^{-1}\big(L_r^*+H_r^*\big) C^*C \lambda(\cdot,p,r)\big](t)\,; 
\end{split}
\end{equation*}
then, as $L_s^*+H_s^*$ does not depend on $s$, we have
\begin{equation*}
\begin{split}
& \|Z_2(t,p,s)-Z_1(t,p,r)\|_{L^2(s,T;\cL(U,\cH))} \le 
\|\lambda(\cdot,p,s)-\lambda(\cdot,p,r)\|_{L^2(s,T;\cL(U,\cH))}
\\[1mm]
& \qquad + \big\|\big(L_s+H_s\big)\Lambda_s^{-1}\big(L_s^*+H_s^*\big) 
C^*C \big[\lambda(\cdot,p,s)-\lambda(\cdot,p,r)\big]\big\|_{L^2(s,T;\cL(U,\cH))} 
\\[1mm]
& \qquad + \big\|\big[\big(L_s+H_s\big)\Lambda_s^{-1}-\big(L_r+H_r\big)\Lambda_r^{-1}\big]
\big(L_r^*+H_r^*\big) C^*C \lambda(\cdot,p,r)\big\|_{L^2(s,T;\cL(U,\cH))}
\\[1mm]
& \quad =: s_1+s_2+s_3\,. 
\end{split}
\end{equation*}
The summand $s_1$ is estimated by \eqref{e:diff_lambda3}, while 
 $s_2$ can be bounded by a constant times $s_1$; thus, it remains to estimate the third summand $s_3$, for which a further decomposition leads to
\begin{equation*}
\begin{split}
s_3 & \le \big\|\big(L_s+H_s\big)\Lambda_s^{-1}\big[\Lambda_r-\Lambda_s\big]\Lambda_r^{-1}\big(L_r^*+H_r^*\big) C^*C
\lambda(\cdot,p,r)\big\|_{L^2(s,T;\cL(U,\cH))}
\\
& \ \ + \big\|\big[\big(L_s+H_s\big)-\big(L_r+H_r\big)\big]\Lambda_r^{-1}
\big(L_r^*+H_r^*\big) C^*C \lambda(\cdot,p,r)\big\|_{L^2(s,T;\cL(U,\cH))}.
\end{split}
\end{equation*}
It can be easily seen that the quantities in the right member behave all as $\omega(s-r)$, with $\omega(\tau)\to 0$ as $\tau \to 0$.  This proves  \eqref{e:Z-reg22}.
\end{proof}


\subsection{The optimal cost operators.}
This section is devoted to prove that a representation of the optimal control in closed-loop form
can be obtained, with gains that involve the very same linear bounded operators that are building blocks
of the optimal cost.
The line of argument is essentially the one pursued in \cite[Section~2]{ac-bu-JOTA}, whose trickier step 
was to develop different representations for the operators that are displayed in the optimal cost;
these allow to identify the presence of the optimal cost operators in a first feedback formula
that follows from the optimality condition (see Lemma~\ref{l:key} below). 

Since $B$ is unbounded, novel regularity results are called for: in particular, to give a meaning to the gain operators. 
These are provided later in Proposition~\ref{p:riccati-ops-continui} and Proposition~\ref{p:bounded-gains}, which in turn exploit the novel Lemma \ref{l:regularity} and Lemma \ref{l:increm_Z}.

We begin by providing a more precise and explicit formulation of the statement S2. of Theorem~\ref{t:main}.

\begin{proposition} \label{p:riccati-ops_0}
We make reference to the optimal control problem \eqref{e:mild-sln_s}-\eqref{e:cost_s}, under the standing Assumptions~\ref{a:ipo_0} and \eqref{e:ipo_1}.
Given $Y_0\in Y_s$, the optimal cost is a quadratic form in $Y_s$, which reads as
\begin{equation} \label{e:form}
\begin{split}
J_s(\hat{u})=J_s(\hat{u},X_0)
&=\big(P_0(s) w_0,w_0\big)_\cH
+ 2 \text{Re}\Big(\int_0^s P_1(s,p)\eta(p)\,dp,w_0\Big)_\cH
\\
& \myspace + \int_0^s\int_0^s \big(P_2(s,p,q)\eta(p),\eta(q)\big)_U\,dp\,dq\,,
\end{split}
\end{equation}
where the three operators $P_i$, $i=0,1,2$, are given by 
\begin{subequations} \label{e:riccati-ops}
\begin{align}
P_0(s)&=\int_s^T \big[Z_1(t,s)^*C^*C Z_1(t,s)+\psi_1(t,s)^*\psi_1(t,s)\big]\,dt\,,
\label{e:P_0_v1}
\\[1mm]
P_1(s,p)&=\int_s^T \big[Z_1(t,s)^*C^*C Z_2(t,p,s)+\psi_1(t,s)^*\psi_2(t,p,s)\big]\,dt\,,
\label{e:P_1_v1}
\\[1mm]
P_2(s,p,q)&= \int_s^T \big[Z_2(t,q,s)^*C^*C Z_2(t,p,s)+\psi_2(t,q,s)^*\psi_2(t,p,s)\big]\,dt\,,
\label{e:P_2_v1}
\end{align}
\end{subequations} 

\end{proposition}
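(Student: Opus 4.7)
The plan is to substitute the representations \eqref{e:opt-control} of the unique optimal pair into the cost functional \eqref{e:cost_s}, expand the squared norms $\|C\hat{w}(t)\|_\cH^2$ and $\|\hat{u}(t)\|_U^2$ as quadratic expressions in the data $(w_0,\eta(\cdot))$, and then apply Fubini's theorem to interchange the $t$-integration on $[s,T]$ with the $p,q$-integrations on $[0,s]$. Grouping the resulting terms into a pure-$w_0$ part, a mixed $w_0$--$\eta$ part, and a pure-$\eta$ part yields directly the three summands of \eqref{e:form}, with coefficients $P_0(s)$, $P_1(s,p)$, $P_2(s,p,q)$ precisely as in \eqref{e:riccati-ops}.

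Concretely, inserting $\hat{w}(t)=Z_1(t,s)w_0+\int_0^s Z_2(t,p,s)\eta(p)\,dp$ expands the first integrand as
\begin{multline*}
\|C\hat{w}(t)\|_\cH^2 = \bigl(C^*CZ_1(t,s)w_0,Z_1(t,s)w_0\bigr)_\cH \\
+ 2\,\text{Re}\int_0^s \bigl(C^*CZ_2(t,p,s)\eta(p),Z_1(t,s)w_0\bigr)_\cH dp \\
+ \int_0^s\!\!\int_0^s \bigl(C^*CZ_2(t,p,s)\eta(p),Z_2(t,q,s)\eta(q)\bigr)_\cH dp\,dq,
\end{multline*}
and an entirely analogous identity is obtained for $\|\hat{u}(t)\|_U^2$ with $Z_i$ replaced by $\psi_i$ and $C^*C$ replaced by $I_U$. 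Pushing the $t$-integral over $[s,T]$ inside, and moving $Z_1(t,s)^*$, $Z_2(t,q,s)^*$, $\psi_1(t,s)^*$, $\psi_2(t,q,s)^*$ onto the right entries of the inner products, yields exactly the three expressions \eqref{e:P_0_v1}--\eqref{e:P_2_v1}.

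The main technical hurdle is to justify the interchange of integrals and the convergence of the $t$-integrals in \eqref{e:riccati-ops} in operator norm, so that $P_0(s)$, $P_1(s,p)$, $P_2(s,p,q)$ are well-defined bounded operators on the relevant spaces. This is precisely where Lemma~\ref{l:regularity} is indispensable: it supplies $Z_1(\cdot,s)\in C([s,T],\cL(\cH))$ together with $Z_2(\cdot,p,s)\in L^{r}(s,T;\cL(U,\cH))$ for $r=2/(2\gamma-1)$, while \eqref{e:L*} together with Proposition~\ref{e:regularity-newop} gives $\psi_1(\cdot,s)\in C([s,T],\cL(\cH,U))$ and matching $L^q$-bounds for $\psi_2(\cdot,p,s)$. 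Since $\gamma\in (1/2,1)$ forces $r>2$, H\"older's inequality places each of the six operator-valued products in \eqref{e:riccati-ops} into $L^1(s,T)$; this makes the three defining integrals converge in operator norm and legitimises the use of Fubini's theorem on the double integrals involving $\eta$.

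Once boundedness of the $P_i$ is secured, the remaining assertions of Proposition~\ref{p:riccati-ops_0} are immediate consequences of the explicit formulas \eqref{e:riccati-ops}: the $T^*T$-structure of the integrands in \eqref{e:P_0_v1} and \eqref{e:P_2_v1} delivers simultaneously the self-adjointness and non-negativity of $P_0(s)$ and of $P_2(s,\cdot,\cdot)$ in $\cH$ and $L^2(0,s;U)$ respectively, while exchanging $p$ and $q$ in \eqref{e:P_2_v1} at once gives the symmetry $P_2(s,p,q)=P_2(s,q,p)$.
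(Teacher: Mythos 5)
Your proposal is correct and follows essentially the same route as the paper, which omits the proof of this proposition and defers to the identical quadratic-form expansion carried out in the companion work \cite{ac-bu-JOTA}: substitute the representations \eqref{e:opt-control} into \eqref{e:cost_s}, expand, apply Fubini, and read off $P_0$, $P_1$, $P_2$. Your additional care in invoking Lemma~\ref{l:regularity} (the $L^{2/(2\gamma-1)}$ regularity of $Z_2$ and $\psi_2$ with $2/(2\gamma-1)>2$) to place the integrands in $L^1(s,T)$ is exactly the extra justification required here because $B$ is unbounded, so nothing is missing.
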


\medskip
The presence of the operators $P_0$, $P_1$ and $P_2$ (defined by \eqref{e:riccati-ops} and which 
are building-blocks of the quadratic form \eqref{e:form}) is not immediately apparent in a first formula for the optimal strategy that follows in a first step combining the optimality condition with the transition properties of the optimal pair.
Pinpointing this fact requires that we deduce a suitable distinct representation of each operator
$P_i$, $i\in \{0,1,2\}$, beforehand.


\begin{lemma} \label{l:key}
With the operators $\psi_1(t,s)$ and $\psi_2(t,p,s)$ defined in \eqref{e:psi_1} and \eqref{e:psi_2}, $Z_1(t,s)$ and $Z_2(t,p,s)$ defined in \eqref{e:Z_1} and \eqref{e:Z_2}, respectively, then the optimal cost operators $P_i$ in \eqref{e:riccati-ops} can be equivalently rewritten as follows, $i\in \{0,1,2\}$:
\begin{subequations} \label{e:riccati-ops_2}
\begin{align}
P_0(t)&=\int_t^T e^{A^*(\sigma-t)}C^*CZ_1(\sigma,t)\,d\sigma\,,
\label{e:P_0_v2}
\\[1mm]
P_1(t,p)&=\int_t^T e^{A^*(\sigma-t)}C^*C Z_2(\sigma,p,t)\,d\sigma\,,
\label{e:P_1_v2}
\\[1mm]
P_2(t,p,q)&=\int_t^T \lambda(\sigma,q,t)^*C^*C Z_2(\sigma,p,t)\,d\sigma\,.
\label{e:P_2_v2}
\end{align}
\end{subequations}
Moreover, an additional (third) expression of $P_1(t,p)$ holds true:
\begin{equation} \label{e:P_1_v3}
P_1(t,p)=\int_t^T  Z_1(\sigma,t)^*\,C^*C\lambda(\sigma,p,t)\,d\sigma\,,
\end{equation}
so that in particular
\begin{equation} \label{e:P_1^*(t,t)}
P_1(t,t)^* = \int_t^T \lambda(\sigma,t,t)^*\,C^*C Z_1(\sigma,t)\,d\sigma\,.
\end{equation}

\end{lemma}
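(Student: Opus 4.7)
Each of the identities in \eqref{e:riccati-ops_2} follows by the same kind of bookkeeping once one first derives a dual-form representation of $\psi_1$ and $\psi_2$. Since $\Lambda_s=I+(L_s^*+H_s^*)C^*C(L_s+H_s)$, applying $\Lambda_s$ to \eqref{e:psi_1} and recognising the structure of \eqref{e:Z_1} inside the resulting expression, one obtains
\begin{equation*}
\psi_1(\cdot,s)=-(L_s^*+H_s^*)\,C^*C\,Z_1(\cdot,s),
\end{equation*}
and the same calculation applied to \eqref{e:psi_2} via \eqref{e:Z_2} gives
\begin{equation*}
\psi_2(\cdot,p,s)=-(L_s^*+H_s^*)\,C^*C\,Z_2(\cdot,p,s).
\end{equation*}
Viewing $\psi_1$ and $\psi_2$ as bounded operators into $L^2(s,T;U)$ (with respective domains $\cH$ and $U$, the latter parametric in $p\in(0,s)$), taking Hilbert adjoints yields the companion identities $\psi_1^*=-Z_1^*\,C^*C\,(L_s+H_s)$ and $\psi_2^*=-Z_2^*\,C^*C\,(L_s+H_s)$.

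With these two identities in hand, every formula in the statement is reached by one short computation. For \eqref{e:P_0_v2} I would use the adjoint of \eqref{e:Z_1}, namely $Z_1^*=(e^{A(\cdot-s)})^*+\psi_1^*(L_s+H_s)^*$, inside \eqref{e:P_0_v1}; this produces
\begin{equation*}
Z_1^*C^*CZ_1=(e^{A(\cdot-s)})^*C^*CZ_1+\psi_1^*(L_s^*+H_s^*)C^*CZ_1=(e^{A(\cdot-s)})^*C^*CZ_1-\psi_1^*\psi_1,
\end{equation*}
so that adding the $\psi_1^*\psi_1$ summand already present in \eqref{e:P_0_v1} yields exactly \eqref{e:P_0_v2}. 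The formula \eqref{e:P_1_v2} follows from the very same template, splitting $Z_1^*$ (and leaving $Z_2$ untouched) in \eqref{e:P_1_v1}: the cross-term $\psi_1^*(L_s^*+H_s^*)C^*CZ_2=-\psi_1^*\psi_2$ then cancels the off-diagonal $\psi_1^*\psi_2$ contribution. For the alternative representation \eqref{e:P_1_v3} I would instead decompose the $Z_2$-factor as $Z_2=\lambda(\cdot,p,s)+(L_s+H_s)\psi_2$ in \eqref{e:P_1_v1}, which produces a cross-term $Z_1^*C^*C(L_s+H_s)\psi_2=-\psi_1^*\psi_2$ killing the $\psi_1^*\psi_2$ piece once more; \eqref{e:P_1^*(t,t)} then drops out by taking the Hilbert adjoint of \eqref{e:P_1_v3} and specialising to $s=p=t$. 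Finally \eqref{e:P_2_v2} is obtained identically, by splitting $Z_2(\cdot,q,s)^*=\lambda(\cdot,q,s)^*+\psi_2(\cdot,q,s)^*(L_s^*+H_s^*)$ in \eqref{e:P_2_v1} and absorbing the $\psi_2^*\psi_2$ term via the $\psi_2^*$ identity.

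The only point that could be delicate, given the unboundedness of $B$, is the well-posedness of all the operator compositions above. This is already secured by the preceding section: $(L_s+H_s)$ and $(L_s^*+H_s^*)$ are bounded between the natural $L^p$-spaces thanks to Proposition~\ref{e:regularity-newop} and (in the memoryless case) Proposition~\ref{p:Ls}, $C\in\cL(\cH)$ acts pointwise in time, and $\|\Lambda_s^{-1}\|\le 1$. Thus the entire argument amounts to purely algebraic bookkeeping between Hilbert spaces on which all the building blocks are bounded operators, and the unbounded control operator $B$ is hidden once and for all inside $L_s$, $H_s$ and $\lambda(\cdot,p,s)$; no further regularity issue arises.
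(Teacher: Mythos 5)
Your proof is correct. The paper itself omits the proof of Lemma~\ref{l:key}, deferring to the companion work \cite{ac-bu-JOTA}, and your argument reconstructs exactly the intended one: applying $\Lambda_s$ to \eqref{e:psi_1}--\eqref{e:psi_2} to obtain the identities $\psi_1(\cdot,s)=-(L_s^*+H_s^*)C^*CZ_1(\cdot,s)$ and $\psi_2(\cdot,p,s)=-(L_s^*+H_s^*)C^*CZ_2(\cdot,p,s)$, and then cancelling the quadratic $\psi^*\psi$ terms in \eqref{e:riccati-ops} by splitting $Z_1^*$ or $Z_2^*$ (resp.\ $Z_2$) according to \eqref{e:Z_1}--\eqref{e:Z_2}. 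Your closing remark correctly locates where the unboundedness of $B$ could intervene and why the memberships established in Proposition~\ref{e:regularity-newop}, Proposition~\ref{p:Ls} and Lemma~\ref{l:regularity} make all compositions and integrals well defined.
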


\medskip
In the following result we pinpoint the continuity of the operators $P_0(t)$, $P_1(t,p)$ and $P_2(t,p,q)$ with respect to the various independent variables.
The said regularity allows the integrals of $P_0(t)$, $P_1(r,r)$ and $P_2(r,p,r)$ as well as to use the bounds $\|P_0\|_\infty$, $\|P_1\|_\infty$ and $\|P_2\|_\infty$. 

\begin{proposition} \label{p:riccati-ops-continui}
The optimal cost operators $P_0(t)\in \cL(\cH)$, $P_1(t,p)\in \cL(U,\cH)$, $P_2(t,p,q)\in \cL(U)$ are continuous with respect to all the variables at hand.
\end{proposition}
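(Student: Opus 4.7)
The plan is to argue from the alternative representations \eqref{e:riccati-ops_2} of Lemma~\ref{l:key}, reducing joint continuity of each $P_i$ to three ingredients already at our disposal: (a) the strong continuity and uniform boundedness on $[0,T]$ of $\{e^{A^*\sigma}\}$; (b) the $L^{2/(2\gamma-1)}$-integrability and Hölder-type increment estimates for $\lambda(\sigma,\cdot,\cdot)$ from Lemmas~\ref{l:lemma1} and~\ref{e:reg_lambda}; (c) the regularity and increment bounds for $Z_1$, $Z_2$ from Lemmas~\ref{l:regularity} and~\ref{l:increm_Z}. Because the running variable $t$ appears simultaneously as a limit of integration and as an argument of the integrand, every increment will be split into three pieces---change of domain, change of semigroup factor, change of the $Z_i$ or $\lambda$ factor.

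I would start with $P_0(t)$, using \eqref{e:P_0_v2}. For $0\le t_0<t<T$ write
\[
P_0(t)-P_0(t_0)=I_1+I_2+I_3,
\]
with
\[
I_1=-\int_{t_0}^{t}e^{A^*(\sigma-t_0)}C^*C\,Z_1(\sigma,t_0)\,d\sigma,\quad
I_2=\int_t^{T}\!\bigl[e^{A^*(\sigma-t)}-e^{A^*(\sigma-t_0)}\bigr]C^*C\,Z_1(\sigma,t_0)\,d\sigma,
\]
\[
I_3=\int_t^{T}e^{A^*(\sigma-t)}C^*C\,\bigl[Z_1(\sigma,t)-Z_1(\sigma,t_0)\bigr]\,d\sigma.
\]
Then $\|I_1\|\lesssim (t-t_0)\sup\|Z_1\|$, $\|I_3\|\lesssim T\,\omega(t-t_0)$ by \eqref{e:Z-reg1}, and $\|I_2\|\to 0$ by strong continuity of $e^{A^*\cdot}$ (uniform on compact subsets applied to the finitely many values of $C^*C\,Z_1(\sigma,t_0)$ thanks to Lemma~\ref{l:regularity}) combined with dominated convergence.

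For $P_1(t,p)$ and $P_2(t,p,q)$, which depend on two and three variables respectively, I would argue continuity variable by variable, which implies joint continuity via uniform estimates. The continuity of $P_1(t,p)$ in $p$ follows at once from \eqref{e:P_1_v2} and \eqref{e:Z-reg21}; continuity in $t$ repeats the three-piece split above, now using \eqref{e:Z-reg22} in place of \eqref{e:Z-reg1} and the $L^{2/(2\gamma-1)}$-in-$\sigma$ bound of Lemma~\ref{l:regularity} to estimate the vanishing-domain term $I_1$ by Hölder's inequality. For $P_2(t,p,q)$, using \eqref{e:P_2_v2}, the integrand pairs two $L^{2/(2\gamma-1)}$-valued factors; Hölder (noting $2(2\gamma-1)/2=2\gamma-1<1$) gives a well-defined integral, and the increments with respect to $p$, $q$, $t$ are controlled respectively by \eqref{e:Z-reg21}, \eqref{e:diff_lambda2}, and by \eqref{e:Z-reg22}, \eqref{e:diff_lambda3} together with the standard shrinking-domain argument.

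The main obstacle is the third variable $t$ in $P_2$, because the integrand in $\int_t^T\lambda(\sigma,q,t)^*C^*C\,Z_2(\sigma,p,t)\,d\sigma$ is only in $L^1(t,T;\cL(U))$---not continuous---and both factors exhibit the singularity $(\sigma-t)^{-\gamma}$ as $\sigma\downarrow t$. One must ensure that as $t$ varies the $L^{2/(2\gamma-1)}$ bounds for both $\lambda(\cdot,q,t)$ and $Z_2(\cdot,p,t)$ are uniform in $t$ (which they are, by Lemma~\ref{l:lemma1} and the proof of Lemma~\ref{l:regularity}) so that the shrinking-interval piece $I_1$ tends to zero by Hölder, while the "inner" piece is handled by \eqref{e:diff_lambda3} and \eqref{e:Z-reg22}. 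Once this delicate balance is verified, joint continuity of $P_2$---and therefore the statement---follows.
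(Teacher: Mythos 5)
Your proposal is correct and follows essentially the same route as the paper: it works from the representations \eqref{e:riccati-ops_2}, establishes uniform bounds via H\"older/Cauchy--Schwarz using the $L^{2/(2\gamma-1)}$ estimates of Lemma~\ref{l:lemma1} and Lemma~\ref{l:regularity}, and controls each increment by the same three-piece decomposition (shrinking domain, semigroup or $\lambda$ factor, $Z_i$ factor) handled with \eqref{e:diff_lambda2}, \eqref{e:diff_lambda3} and \eqref{e:Z-reg1}--\eqref{e:Z-reg22}. If anything, your treatment of $P_0$ is more explicit than the paper's, which simply asserts continuity ``by inspection'' after the uniform bound.
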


\begin{proof}
\textbf{0.}  From the representation \eqref{e:P_0_v2} of $P_0$ we immediately obtain
\begin{equation*}
\|P_0(t)x\|_{\cH} \le M\int_t^T e^{\omega(\sigma-t)}\|Z_1(\sigma,t)x\|_{\cH}\,d\sigma 
\le c\|Z_1(\cdot,t)x\|_{\cH}\le c\|x\|_{\cH}\,,
\end{equation*}
so that
\begin{equation}\label{e:stima_unifP0}
\|P_0(t)\|_{\cL(\cH)}\le C, \qquad  0\le t \le T\,.
\end{equation}
An inspection of \eqref{e:P_0_v2} shows that in fact $P_0(\cdot)\in C([0,T],\cL(\cH))$.

\smallskip
\noindent
\textbf{1.} The formula \eqref{e:P_1_v2} for $P_1(t,p)$ yields the estimates
\begin{equation*}
\begin{split}
\|P_1(t,p)v\|_{\cH}
&\le 
M \int_t^T e^{\omega(\sigma-t)} \|Z_2(\sigma,p,t)\|_{\cL(U,\cH)}\|v\|_{U}\,d\sigma
\\
& \le M \,\Big(\int_t^T \|Z_2(\sigma,p,t)\|_{\cL(U,\cH)}^r \,d\sigma\Big)^{1/r} 
\Big(\int_t^T e^{\omega r'(\sigma-t)}\,d\sigma\Big)^{1/r'} \|v\|_{U} 
\\
& \le c \,\|v\|_{U} \big(e^{\omega r'(T-t)}-1\big)^{1/r'}, \quad  \textrm{where} \ r=\frac{2}{2\gamma-1}, \; r'=\frac{2}{3-2\gamma}\,,
\end{split}
\end{equation*}
with the latter implying
\begin{equation}\label{e:stima_unifP1}
\|P_1(t,p)\|_{\cL(U,\cH)}\le C, \qquad  0\le p \le t \le T\,.
\end{equation}
More specifically, the asymptotic estimate
\begin{equation}
\|P_1(t,p)\|_{\cL(\cH)}\le c \,\omega(T-t) \quad \textrm{with $\omega(\tau)\to 0$, as $\tau\to 0$} 
\end{equation}
holds true.

In order to prove continuity with respect to the first variable $t$, we compute for $t\ge \tau\ge p$
\begin{equation*}
\begin{split}
P_1(t,p)-P_1(\tau,p)&=\int_t^T \big[e^{A^*(\sigma-t)}-e^{A^*(\sigma-\tau)}\big] 
C^*C Z_2(\sigma,p,t)\,d\sigma
\\
& \qquad + \int_t^T e^{A^*(\sigma-\tau)}C^*C\big[Z_2(\sigma,p,t)-Z_2(\sigma,p,\tau)\big]\,d\sigma
\\
& \qquad + \int_t^T e^{A^*(\sigma-\tau)}C^*C Z_2(\sigma,p,\tau)\,d\sigma\,.
\end{split}
\end{equation*}
By \eqref{e:Z-reg22} and \eqref{e:Z-reg} we see that the increment $P_1(t,p)-P_1(\tau,p)$ tends to $0$ as $t-\tau \to 0$. 

As for the increment on the second independent variable, we see that 
\begin{equation*}
P_1(t,p)-P_1(t,q)=\int_t^T e^{A^*(\sigma-t)} C^*C \big[Z_2(\sigma,p,t)-Z_2(\sigma,q,t)\big]\,d\sigma
\end{equation*}
so that in this case we use \eqref{e:Z-reg21}, to find 
$\|P_1(t,p)-P_1(t,q)\|_{\cL(U,\cH)}\to 0$ as $p-q\to 0$.

\smallskip
\noindent
\textbf{2.} 
Starting from formula \eqref{e:P_2_v2} for $P_2(t,p,q)$, we find immediately that
\begin{equation*}
\|P_2(t,p,q)\|_{\cL(U)}\le 
c\,\Big[\int_t^T \|\lambda(\sigma,q,t)\|_{\cL(U,\cH)}^2\,d\sigma\Big]^{1/2}
\Big[\int_t^T\|Z_2(\sigma,p,t)\|_{\cL(U,\cH)}^2\,d\sigma\Big]^{1/2},
\end{equation*}
so that the uniform bound 
\begin{equation}\label{e:stima_unifP2}
\|P_2(t,p,q)\|_{\cL(U})\le C \qquad \textrm{for } t\ge q \ge p
\end{equation}
holds true.
Next, we evaluate (for $t\ge \tau\ge p$ and $\tau\ge q$)
\begin{equation*}
\begin{split}
P_2(t,p,q)-P_2(\tau,p,q)&=\int_t^T \big[\lambda(\sigma,q,t)^*-\lambda(\sigma,q,\tau)^*\big] 
C^*C Z_2(\sigma,p,t)\,d\sigma
\\
& \qquad + \int_t^T \lambda(\sigma,q,\tau)^*C^*C\big[Z_2(\sigma,p,t)-Z_2(\sigma,p,\tau)\big]\,d\sigma
\\
& \qquad + \int_t^T \lambda(\sigma,q,\tau)^* C^*C Z_2(\sigma,p,\tau)\,d\sigma\,.
\end{split}
\end{equation*}

By \eqref{e:stima1} and \eqref{e:diff_lambda3}, as well as \eqref{e:Z-reg} and \eqref{e:Z-reg22},  we deduce that $\|P_2(t,p,q)-P_2(\tau,p,q)\|_{\cL(U)}\to 0$ as $t-\tau\to 0$.

As for the increment on the second independent variable, we have
\begin{equation*}
P_2(t,p,q)-P_2(t,r,q)=\int_t^T \lambda(\sigma,q,t)^* C^*C \big[Z_2(\sigma,p,t)-Z_2(\sigma,r,t)\big]\,d\sigma
\end{equation*}
for $t\ge p\ge r$ and $t\ge q$, and again by \eqref{e:stima1} and \eqref{e:Z-reg21} we find $\|P_2(t,p,q)-P_2(t,r,q)\|_{\cL(U)}\to 0$ as $p-r\to 0$.
 
Finally, for $t\ge p$ and $t\ge q\ge r$ the increment on the third variable yields 
\begin{equation*}
P_2(t,p,q)-P_2(t,p,r)=\int_t^T \big[\lambda(\sigma,q,t)^*-\lambda(\sigma,r,t)^*\big]
C^*C Z_2(\sigma,p,t)\,d\sigma\,,
\end{equation*}
whose $\cL(U)$-norm tends to $0$, as ${q-r}$ tends to $0$, in view of \eqref{e:diff_lambda2} and \eqref{e:Z-reg}.
\end{proof}


\subsection{Closed loop optimal solution, the gain operators}
The analysis performed in the preceding section constitute the premise for the derivation of the
feedback formula \eqref{e:feedback_2}, which expresses the optimal control at time $t> s\ge 0$ in terms of the dynamics -- namely, the $w$-component of the state -- (pointwise in time) as well as of the past values of the optimal solution itself from $s$ forward, up to time $t$.
The statement S3.~of Theorem~\ref{t:main} is explicitly recorded in the result that follows
for the reader's convenience.

We emphasize that while the proof of Proposition~\ref{p:feedback-formula} below is akin to the one that led to show Proposition~1 in \cite{ac-bu-JOTA}, the unboundedness of the control operator $B$ raises the technical issue here of whether the gain operators $B^*P_0(t)$ and $B^*P_1(t,p)$ are defined also on elements of the respective functional spaces, and not just on the optimal evolution.
This question is answered positively at the end of the section; see the next Proposition~\ref{p:bounded-gains}.  
 
\begin{proposition} \label{p:feedback-formula}
Let $\hat{u}(t,s;X_0)$ be the optimal control for the minimization problem \eqref{e:mild-sln_s}-\eqref{e:cost_s}, with initial state $X_0$.
Then, the optimal control $\hat{u}$ admits the following representation: 
\begin{equation*} 
\begin{split}
\hat{u}(t,s,X_0)&=-\big[B^*P_0(t)+P_1(t,t)^*\big]\hat{w}(t,s,X_0)
\\
& \myspace -\int_0^t \big[B^*P_1(t,p)+P_2(t,p,t)\big]\theta(p)\,dp\,, 
\end{split}
\end{equation*}
with 
\begin{equation*}
\theta(\cdot)=
\begin{cases} \eta(\cdot) & \text{in $[0,s)$}
\\
\hat{u}(\cdot,s,X_0) & \text{in $[s,t)$}
\end{cases}
\end{equation*}
and the operators $P_i$ as in \eqref{e:riccati-ops_2} (originally, in \eqref{e:riccati-ops}), 
$i\in \{0,1,2\}$. 

\end{proposition}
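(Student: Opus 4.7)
The plan is to combine the dynamic programming principle with the first-order optimality condition, and then recast the resulting identity using the alternative representations of the optimal cost operators provided by Lemma~\ref{l:key}. As a preliminary step, by uniqueness of the optimal pair (statement S1.\ in Theorem~\ref{t:main}) the restriction of $(\hat{u}(\cdot, s, X_0), \hat{w}(\cdot, s, X_0))$ to the subinterval $[t,T]$ coincides with the optimal pair for Problem~\ref{p:problem_s} restarted at time $t$ with initial datum $X_t := (\hat{w}(t,s,X_0), \theta|_{[0,t]}) \in Y_t$, where $\theta$ is as in the statement. This \emph{transition property}, already exploited in \cite{ac-bu-JOTA}, reduces the task to evaluating $\hat{u}(\sigma,t,X_t)$ at the left endpoint $\sigma=t$.

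I would then rewrite the optimality condition $\Lambda_t \hat{u}_t + N_t X_t = 0$ for the restarted problem in the equivalent form
\begin{equation*}
\hat{u}(\sigma,t,X_t) = -\bigl[(L_t^* + H_t^*)\, C^* C\, \hat{w}(\cdot,t,X_t)\bigr](\sigma), \qquad \sigma \in [t,T],
\end{equation*}
obtained by inserting $\hat{w}(\cdot,t,X_t) = E(\cdot,t)X_t + (L_t + H_t)\hat{u}_t$ into $\Lambda_t \hat{u}_t = -N_t X_t$. Evaluating this identity at $\sigma = t$ is legitimate thanks to Corollary~\ref{c:reg_coppia} together with the continuity of $L_t^*$ and $H_t^*$ acting on continuous $\cH$-valued data (see \eqref{e:L*} in the Appendix and Proposition~\ref{e:regularity-newop}). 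Plugging in $\hat{w}(\sigma,t,X_t) = Z_1(\sigma,t)\hat{w}(t) + \int_0^t Z_2(\sigma,p,t)\theta(p)\,dp$ and splitting with respect to the two additive blocks of $X_t$ yields
\begin{equation*}
\hat{u}(t,s,X_0) = -(L_t^* + H_t^*)[C^* C Z_1(\cdot,t)](t)\,\hat{w}(t) - \int_0^t (L_t^* + H_t^*)[C^* C Z_2(\cdot,p,t)](t)\,\theta(p)\,dp.
\end{equation*}

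Next, I would match each of the four terms with the announced gain operators. Using the defining formulas of $L_t^*$ and $H_t^*$, swapping the order of integration in the $H_t^*$ contributions, and invoking the identity $\lambda(\sigma,t,t)^* = \int_t^\sigma B^* k(r-t)^* e^{A^*(\sigma-r)}\,dr$ that follows directly from \eqref{e:lambda}, a direct computation gives
\begin{align*}
(L_t^*\, C^* C\, Z_1(\cdot,t))(t) &= B^* P_0(t), & (H_t^*\, C^* C\, Z_1(\cdot,t))(t) &= P_1(t,t)^*, \\
(L_t^*\, C^* C\, Z_2(\cdot,p,t))(t) &= B^* P_1(t,p), & (H_t^*\, C^* C\, Z_2(\cdot,p,t))(t) &= P_2(t,p,t),
\end{align*}
where the four identifications are exactly the alternative expressions \eqref{e:P_0_v2}, \eqref{e:P_1^*(t,t)}, \eqref{e:P_1_v2} and \eqref{e:P_2_v2} supplied by Lemma~\ref{l:key}. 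Assembling the four pieces produces the claimed formula \eqref{e:feedback_2}.

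The main obstacle in this scheme is the rigorous interpretation of the pointwise evaluation at $\sigma = t$ and of the product $B^* P_0(t)\hat{w}(t)$, owing to the unboundedness of $B$: the quantity $B^* P_0$ is a priori meaningful only along the specific optimal trajectory, and not on a generic element of $\cH$. At this stage I would therefore claim the identities above only as equalities in $U$ evaluated along the optimal pair $(\hat{w}, \hat{u})$. The separate issue of extending $B^* P_0(\cdot)$ and $B^* P_1(\cdot,p)$ to \emph{bounded} operators on the ambient state space -- indispensable to read the right-hand side of \eqref{e:feedback_2} intrinsically -- will be addressed in the forthcoming Proposition~\ref{p:bounded-gains}, through the regularity properties of $Z_1$ and $\lambda$ gathered in Lemma~\ref{l:regularity} and Lemma~\ref{l:increm_Z}.
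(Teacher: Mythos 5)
Your proposal is correct and follows essentially the same route as the paper: the transition (dynamic programming) property plus the optimality condition for the restarted problem, pointwise evaluation at $\sigma=t$ justified by Corollary~\ref{c:reg_coppia}, and identification of the four resulting terms with $B^*P_0(t)$, $P_1(t,t)^*$, $B^*P_1(t,p)$, $P_2(t,p,t)$ via the alternative representations of Lemma~\ref{l:key} -- which is precisely the argument of Proposition~1 in \cite{ac-bu-JOTA} to which the paper defers. Your handling of the unboundedness of $B$ (reading the gains along the optimal trajectory first and postponing their intrinsic boundedness to Proposition~\ref{p:bounded-gains}) also matches the paper's organization.
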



\begin{proposition} \label{p:bounded-gains}
The (gain) operators $B^*P_0(t)$ and $B^*P_1(t,p)$ belong to the spaces $\cL(\cH)$ and $\cL(\cH,U))$ respectively. Moreover $B^*P_0(\cdot)\in C([0,T],\cL(\cH))$ and, for $s>p$,  
\begin{equation*}
B^*P_1(\cdot,p)\in L^\rho(s,T;\cL(\cH;U)) \quad \textrm{with } \rho \ 
\begin{cases} 
=\frac{2}{4\gamma-3} & \gamma>3/4
\\
<+\infty &  \gamma=3/4
\\
=+\infty & \gamma<3/4\,.
\end{cases}
\end{equation*}

\end{proposition}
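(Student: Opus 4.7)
The plan is to work from the representations \eqref{e:P_0_v2}--\eqref{e:P_1_v2} of $P_0$ and $P_1$ furnished by Lemma~\ref{l:key}, moving $B^*$ under the integral sign and exploiting the interplay between the analytic-semigroup bound on $B^* e^{A^*\cdot}$ and the regularity of $Z_1(\sigma,t)$ and $Z_2(\sigma,p,t)$ obtained in Lemma~\ref{l:regularity}.

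The first key step is an \emph{a priori} estimate: Assumption A2 yields $A^{-\gamma}B\in \cL(U,\cH)$, hence by duality $B^*(A^*)^{-\gamma}\in \cL(\cH,U)$, and combining with analyticity of $\{e^{A^*\tau}\}$ one gets
\begin{equation*}
\|B^* e^{A^*\tau}\|_{\cL(\cH,U)}\le \|B^*(A^*)^{-\gamma}\|_{\cL(\cH,U)}\,\|(A^*)^\gamma e^{A^*\tau}\|_{\cL(\cH)}\le C\,\tau^{-\gamma},\quad \tau>0.
\end{equation*}
Applying this under the integral in \eqref{e:P_0_v2} and using $Z_1(\cdot,t)\in C([t,T],\cL(\cH))$ yields
\begin{equation*}
\|B^*P_0(t)x\|_U\le C\|C\|^2\,\|x\|_\cH\int_t^T (\sigma-t)^{-\gamma}\|Z_1(\sigma,t)\|_{\cL(\cH)}\,d\sigma\le C'(T-t)^{1-\gamma}\|x\|_\cH,
\end{equation*}
giving $B^*P_0(t)\in \cL(\cH,U)$ uniformly in $t$. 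Continuity of $t\mapsto B^*P_0(t)$ follows by splitting the increment $B^*P_0(t)-B^*P_0(\tau)$ into three customary pieces (change of integration limits; difference of the two semigroups, using strong continuity together with the uniform bound $(\sigma-t)^{-\gamma}$; difference of $Z_1(\sigma,t)-Z_1(\sigma,\tau)$ controlled by Lemma~\ref{l:increm_Z} \eqref{e:Z-reg1}) and applying dominated convergence.

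For $B^*P_1(t,p)$ the same manipulation gives
\begin{equation*}
\|B^*P_1(t,p)\|_{\cL(U)}\le C\int_t^T(\sigma-t)^{-\gamma}\|Z_2(\sigma,p,t)\|_{\cL(U,\cH)}\,d\sigma =: I(t),
\end{equation*}
with $Z_2(\cdot,p,t)\in L^r(t,T;\cL(U,\cH))$ uniformly in $t$, where $r=2/(2\gamma-1)$. The behaviour of $I(t)$ then splits into three cases according to the relation between $\gamma r'$ and $1$, where $r'=2/(3-2\gamma)$. If $\gamma<3/4$ one has $\gamma r'<1$, so H\"older's inequality with conjugate exponents $(r',r)$ gives $I(t)\le C$, i.e.\ $B^*P_1(\cdot,p)\in L^\infty$. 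In the borderline case $\gamma=3/4$, taking exponents $(q,q')$ close to $(r',r)$ gives $I\in L^\rho$ for every finite $\rho$. For $\gamma>3/4$ H\"older breaks down and one invokes the Hardy--Littlewood--Sobolev/Young convolution estimate for the kernel $\tau^{-\gamma}$: with $q$ chosen just below $1/\gamma$ and paired with $r$, one obtains $I\in L^\rho$ with
\begin{equation*}
\frac{1}{\rho}=\frac{1}{q}+\frac{1}{r}-1\,\longrightarrow\,\gamma+\frac{2\gamma-1}{2}-1=\frac{4\gamma-3}{2},
\end{equation*}
which is the advertised exponent $\rho=2/(4\gamma-3)$.

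The main obstacle is precisely this last case, because $Z_2(\sigma,p,t)$ depends on both variables and a straight Young inequality would require a $t$-independent majorant. The argument is therefore preceded by a majorization: the continuity statement \eqref{e:Z-reg22} of Lemma~\ref{l:increm_Z} combined with the uniform-in-$t$ bound $\|Z_2(\cdot,p,t)\|_{L^r(t,T;\cL(U,\cH))}\le C$ given by Lemma~\ref{l:regularity} allows one to replace $\|Z_2(\sigma,p,t)\|_{\cL(U,\cH)}$ with a function $\Phi(\sigma)$ depending on $\sigma$ only, belonging to $L^r(p,T)$; only after this reduction does the Hardy--Littlewood--Sobolev estimate apply cleanly to produce $B^*P_1(\cdot,p)\in L^\rho(s,T;\cL(\cH;U))$. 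All other steps (Fubini to justify pulling $B^*$ under the integral, and dominated convergence for continuity) are routine given the bounds above.
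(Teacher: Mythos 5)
Your argument follows the paper's proof essentially verbatim: both start from the representations \eqref{e:P_0_v2}--\eqref{e:P_1_v2}, use $B^*(A^*)^{-\gamma}\in\cL(\cH,U)$ together with analyticity to obtain the kernel bound $(\sigma-t)^{-\gamma}$, invoke $Z_1(\cdot,t)\in C([t,T],\cL(\cH))$ and $Z_2(\cdot,p,t)\in L^{2/(2\gamma-1)}(t,T;\cL(U,\cH))$ from Lemma~\ref{l:regularity}, and conclude with a Young/Hardy--Littlewood convolution estimate yielding exactly the stated exponents $\rho$. Your additional majorization step (replacing $\|Z_2(\sigma,p,t)\|_{\cL(U,\cH)}$ by a $t$-independent $\Phi(\sigma)$ before applying the convolution inequality) addresses a point the paper leaves implicit, although \eqref{e:Z-reg22} -- an $L^2$-in-$\sigma$ modulus of continuity in the third variable -- does not by itself produce a pointwise envelope; the cleaner route is to majorize the dominant term via $\|\lambda(\sigma,p,t)\|_{\cL(U,\cH)}\le C\int_p^\sigma(\sigma-\mu)^{-\gamma}\|k(\mu-p)\|_{\cL(\cH)}\,d\mu$, which is independent of $t$ and lies in $L^{2/(2\gamma-1)}$.
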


\begin{proof}
Showing that the gain operator $B^*P_0(t)$ is bounded from $\cH$ into $U$ is pretty straightforward
(and also expected, as this result needs to be consistent with the memoryless case):
it suffices to write
\begin{equation*}
B^*P_0(t) = B^*(A^*)^{-\gamma}\int_t^T {A^*}^\gamma e^{A^*(\sigma-t)} C^*C Z_1(\sigma,t)\,d\sigma
\end{equation*}
and recall that $Z_1(\cdot,t)\in C([t,T],\cL(\cH))$ to find
\begin{equation*}
\|B^*P_0(t)\|_{\cL(\cH,U)} \le c\int_t^T \frac{1}{(\sigma-t)^\gamma}\|Z_1(\cdot,t)\|_{\cL(\cH)}
\le c(T-t)^{1-\gamma}\|Z_1(\cdot,t)\|_{\cL(\cH)}\,.
\end{equation*}

As for the gain $B^*P_1(t,p)$, first of all we need to show that for any $v\in U$ we have $P(t,p)v\in \cD(B^*)$.
Indeed, with $z\in \cD(B)$ we see that
\begin{equation*}
\begin{split}
(P_1(t,p)v,Bz)_{\cH} &=\Big(\int_t^Te^{A^*(\sigma-t)}C^*C Z_2(\sigma,p,t)v\,d\sigma,Bz\Big)_{\cH}
\\
& =\int_t^T \big(e^{A^*(\sigma-t)}C^*C Z_2(\sigma,p,t)v,Bz\big)_{\cH}\,d\sigma
\\
& =\int_t^T \big((B^*{A^*}^{-\gamma})\,{A^*}^\gamma e^{A^*(\sigma-t)}C^*C Z_2(\sigma,p,t)v,z\big)_U\,d\sigma
\end{split}
\end{equation*}
which yields 
\begin{equation*}
(P_1(t,p)v,Bz)_{\cH}\le c\int_t^T \frac{1}{(\sigma-t)^\gamma} \|Z_2(\sigma,p,t)v\|_{\cH}\,d\sigma \|z\|_U
\end{equation*}
so that 
\begin{equation*}
\|B^*P_1(t,p)v\|_{\cH}\le c\int_t^T \frac{1}{(\sigma-t)^\gamma} \|Z_2(\sigma,p,t)v\|_{\cH}\,d\sigma\,.
\end{equation*}
Thus, since (by \eqref{e:Z-reg}) $Z_2(\cdot,p,t)\in L^r(t,T;\cL(U,\cH))$ with $r=2/(2\gamma-1)$, it follows
\begin{equation}\label{e:reg_B*P1}
B^*P_1(\cdot,p)v\in L^\rho(t,T;U)\,, \; \textrm{with} \; \rho
\begin{cases} 
=\frac{2}{4\gamma-3} & \gamma>\frac{3}{4}
\\
<+\infty &  \gamma=\frac{3}{4}
\\
=+\infty & \gamma<\frac{3}{4}\,,
\end{cases}
\end{equation} 
along with 
\begin{equation*}
\|B^*P_1(\cdot,p)v\|_{L^\rho(t,T;U)}\le c \|Z_2(\cdot,p,t)v\|_{L^r(t,T;\cH)}\le c\|v\|_{U}
\end{equation*}
for every $t>p$. 
\end{proof}


\section{Well-posedness for the coupled system of quadratic operator equations. 
Proofs of the statements S4.-S5.}
\label{s:wellposedRE}
This section deals with the issue of the unique determination of the triplet of operators which 
enter the feedback representation \eqref{e:feedback_2} of the optimal control. 
This is a key step to achieve the optimal synthesis.
Showing that the operators $P_0(t)$, $P_1(t,p)$, $P_1(t,p)^*$, $P_2(t,p,q))$ do solve a certain coupled system of quadratic (operator) equations corresponding to the optimal control problem on $[s,T]$, that is \eqref{e:DRE}, establishes the property of existence for the system \eqref{e:DRE}.
The next and final step is to prove the property of uniqueness for the solutions to \eqref{e:DRE}, thus
confirming its well-posedness.

\smallskip
\paragraph{Existence.} 
For the proof of existence we omit the details and instead refer the reader to the necessarily 
long computations in \cite[Section~2]{ac-bu-JOTA} ({\sl cf.} the proof of statement S5.), which are
equally valid here.
We limit ourselves to state explictly a result which plays a primary role in the computation of the derivatives of $P_0$, $P_1$ and $P_2$.


\begin{lemma} \label{l:derivatives}
Let $\psi_1(p,t)$, $\psi_2(r,p,t)$, $Z_1(p,\sigma)$, $Z_2(\sigma,p,t)$ the operators
defined in \eqref{e:psi_1}, \eqref{e:psi_2}, \eqref{e:Z_1} and \eqref{e:Z_2} respectively.
If $x\in \cD(A)$ and $v\in U$, then the derivatives 
$\partial_t\psi_1(p,t)x$, $\partial_t\psi_2(r,p,t)v$, 
$\partial_t Z_1(p,\sigma)x$, $\partial_t Z_2(\sigma,p,t)v$ exist, with

\begin{subequations} \label{e:derivatives1}
\begin{align} 
\partial_t Z_1(\sigma,t)x 
&= -e^{A(\sigma-t)}Ax +\big[\big(L_t+H_t\big)\partial_t \psi_1(\cdot,t)x\big](\sigma)
\notag\\
& \qquad
- \big[e^{A(\sigma-t)}B+\lambda(\sigma,t,t)\big]\psi_1(t,t)x\in L^{\frac1{\gamma+\varepsilon}}(t,T;\cH)\,,
\label{e:D-Z_1}
\\[2mm]
\partial_t Z_2(\sigma,p,t)v &= 
-e^{A(\sigma-t)}k(t-p)Bv +\big[\big(L_t+H_t\big)\partial_t \psi_2(\cdot,p,t)v\big](\sigma)
\notag\\
& \qquad- \big[e^{A(\sigma-t)}B+\lambda(\sigma,p,t)\big]\psi_2(\sigma,p,t)v
\in L^{\frac1{\gamma+\varepsilon}}(t,T;\cH)\,,
\label{e:D-Z_2}
\end{align}
\end{subequations}
where
\begin{subequations} \label{e:derivatives2}
\begin{align} 
\partial_t \psi_1(p,t)x &= 
\Lambda_t^{-1}\Big[\big(L_t^*+H_t^*\big)C^*C\big[\big(e^{A(\cdot-t)}B+\lambda(\cdot,t,t)\big)\big]
\psi_1(t,t)x
\notag\\
& \qquad + e^{A(\cdot-t)}Ax\Big](p)\in L^r(t,T;U)\,,
\\[2mm]
\partial_t \psi_2(r,p,t)v &= 
\Lambda_t^{-1}\Big[\big(L_t^*+H_t^*\big)C^*C\big[\big(e^{A(\cdot-t)}B+\lambda(\cdot,t,t)\big)\big]\psi_2(t,p,t)v
\notag \\
& \qquad + e^{A(\cdot-t)}k(t-p)Bv\Big](r)\in L^r(t,T;U)
\end{align}
\end{subequations}
with $\varepsilon \in (0,1-\gamma)$ and for $r=\frac2{2\gamma-1}$.  
\end{lemma}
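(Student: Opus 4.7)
The starting point is the pair of coupled identities $Z_1(\sigma,t)x = e^{A(\sigma-t)}x + [(L_t+H_t)\psi_1(\cdot,t)x](\sigma)$ and $\psi_1(\cdot,t)x = -(L_t^*+H_t^*)C^*C\,Z_1(\cdot,t)x$, the latter being a direct rearrangement of the optimality condition $\Lambda_t\psi_1(\cdot,t)x = -(L_t^*+H_t^*)C^*C\,e^{A(\cdot-t)}x$; an entirely analogous pair holds for $Z_2,\psi_2$, with $e^{A(\sigma-t)}x$ replaced by $\lambda(\sigma,p,t)v$. A crucial observation, already emphasized in the proof of Proposition~\ref{e:regularity-newop}, is that the adjoints $L_s^*$ and $H_s^*$ are in fact independent of $s$ (they integrate from the evaluation point up to $T$), so the entire $t$-dependence on the right-hand sides is localized to the lower integration endpoints of $L_t, H_t$ and to the kernels $e^{A(\cdot-t)}$, $\lambda(\cdot,p,t)$. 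The plan is to differentiate these identities formally in $t$, close the resulting expressions into a linear system for $(\partial_tZ_i,\partial_t\psi_i)$ by inverting $\Lambda_t$, and justify the step by a finite-difference argument in the appropriate $L^p$-topology.

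For the formal step, I rewrite $[H_t\psi_1(\cdot,t)x](\sigma)=\int_t^\sigma \lambda(\sigma,q,q)\psi_1(q,t)x\,dq$ via \eqref{e:Hs_1} and apply Leibniz' rule to both the $L_t$- and $H_t$-integrals: the boundary term at the lower endpoint $q=t$ contributes $-[e^{A(\sigma-t)}B+\lambda(\sigma,t,t)]\psi_1(t,t)x$, the interior term contributes $[(L_t+H_t)\partial_t\psi_1(\cdot,t)x](\sigma)$, and the derivative of the leading exponential gives $-e^{A(\sigma-t)}Ax$ (here the hypothesis $x\in\cD(A)$ enters). This reproduces \eqref{e:D-Z_1}. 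The same mechanism applied to $Z_2(\sigma,p,t)=\lambda(\sigma,p,t)+[(L_t+H_t)\psi_2(\cdot,p,t)](\sigma)$ yields \eqref{e:D-Z_2}, the only new piece being $\partial_t\lambda(\sigma,p,t)=-e^{A(\sigma-t)}k(t-p)B$ from differentiating the leading $\lambda$. To close the system, I substitute the candidate \eqref{e:D-Z_1} into $\partial_t\psi_1=-(L_t^*+H_t^*)C^*C\,\partial_tZ_1$, move the $\partial_t\psi_1$-contribution to the left-hand side using $\Lambda_t = I+(L_t^*+H_t^*)C^*C(L_t+H_t)$, and apply $\Lambda_t^{-1}$ (which is uniformly bounded by $1$ on $L^2$); this produces exactly the expression \eqref{e:derivatives2} for $\partial_t\psi_1$, and the parallel derivation gives $\partial_t\psi_2$.

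The rigorous justification proceeds by forming, for $h>0$ small, the difference quotient $h^{-1}[Z_1(\sigma,t+h)-Z_1(\sigma,t)]$ on the common interval $[t+h,T]$, decomposing it into four pieces matching the Leibniz terms, and passing to the limit in $L^{1/(\gamma+\varepsilon)}(t+h,T;\cH)$. The key ingredients are the $C([t,T],U)$-continuity of $\psi_1(\cdot,t)x$ and the continuity of $Z_1(\cdot,t)$ in both variables from Lemma~\ref{l:regularity}, the moduli-of-continuity estimates for $Z_1,Z_2$ in the initial-time variable from Lemma~\ref{l:increm_Z}, the $L^{2/(2\gamma-1)}$-bounds on $\lambda$ from Lemma~\ref{l:lemma1} (together with the increment estimates of Lemma~\ref{e:reg_lambda}), and the mapping properties of $L_t,H_t$ from Proposition~\ref{e:regularity-newop}. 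The analogous convergence of $h^{-1}[\psi_1(\cdot,t+h)-\psi_1(\cdot,t)]$ in $L^r(t+h,T;U)$, with $r=2/(2\gamma-1)$, follows by applying $\Lambda_{t+h}^{-1}$ to the finite difference of the corresponding right-hand side. The regularity assertions are then read off the formulas: the slowest-decaying piece of $\partial_tZ_1$ is $e^{A(\sigma-t)}B\psi_1(t,t)x$, with $\|e^{A(\sigma-t)}B\|_{\cL(U,\cH)}\lesssim(\sigma-t)^{-\gamma}$ giving exactly membership in $L^{1/(\gamma+\varepsilon)}$; the companion boundary term $\lambda(\sigma,t,t)\psi_1(t,t)x$ lies in the better class $L^r$ by \eqref{e:stima1}; and the remaining summands, smoothed by $(L_t+H_t)$, are continuous on $[t,T]$. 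For $\partial_t\psi_1$ the input to $\Lambda_t^{-1}(L_t^*+H_t^*)C^*C$ is dominated by the $L^r$-in-time term $\lambda(\cdot,t,t)\psi_1(t,t)x$, a class that is preserved by this operator.

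The main technical obstacle is the presence of the unbounded operator $B$ in the boundary term $e^{A(\sigma-t)}B\psi_1(t,t)x$: its pointwise-in-$\sigma$ norm is singular as $\sigma\downarrow t$, so the difference quotient for $\partial_tZ_1$ cannot converge uniformly in $\sigma$ but only in the $L^{1/(\gamma+\varepsilon)}$-norm, which is precisely the exponent stipulated in \eqref{e:D-Z_1}--\eqref{e:D-Z_2}. A closely related subtlety is that the boundary value $\psi_1(t,t)x$ must be a genuine pointwise value, not merely an $L^2$-equivalence class, for the Leibniz rule even to be meaningful; this is supplied by the $C([t,T],U)$-regularity of $\psi_1(\cdot,t)x$ established in Lemma~\ref{l:regularity}, without which the whole differentiation scheme would not get off the ground.
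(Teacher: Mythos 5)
You should first note that the paper does not actually prove this lemma: it explicitly defers the computations to \cite[Section~2]{ac-bu-JOTA}, so the comparison can only be against that implicit argument. Your formal scheme --- Leibniz differentiation of the lower endpoints of $L_t$ and $H_t$, producing the interior term $\big[(L_t+H_t)\partial_t\psi_i\big](\sigma)$ and the boundary term $-\big[e^{A(\sigma-t)}B+\lambda(\sigma,t,t)\big]\psi_i(t,\cdot)$, then closing the system via $\psi_1(\cdot,t)x=-(L_t^*+H_t^*)C^*C\,Z_1(\cdot,t)x$ (using that $L_s^*,H_s^*$ do not depend on $s$, a fact the paper itself exploits in Lemma~\ref{l:increm_Z}) and applying $\Lambda_t^{-1}$ --- is exactly the mechanism the statement encodes, and your insistence on $x\in\cD(A)$ and on the $C([t,T],U)$-regularity of $\psi_1(\cdot,t)x$ to make the boundary values meaningful is on point. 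Note, however, that your mechanism produces the boundary term $-\big[e^{A(\sigma-t)}B+\lambda(\sigma,t,t)\big]\psi_2(t,p,t)v$ in \eqref{e:D-Z_2}, not the printed $\lambda(\sigma,p,t)\psi_2(\sigma,p,t)v$; you should flag this discrepancy (very likely a typo in the statement) rather than assert that the computation ``yields \eqref{e:D-Z_2}'' as written.

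The genuine gap is in the regularity bookkeeping for \eqref{e:derivatives2}. You claim the input to $\Lambda_t^{-1}(L_t^*+H_t^*)C^*C$ is ``dominated by the $L^r$-in-time term $\lambda(\cdot,t,t)\psi_1(t,t)x$''. That is backwards. The summand $e^{A(\cdot-t)}B\psi_1(t,t)x$ has $\cL(U,\cH)$-norm of order $(\sigma-t)^{-\gamma}$ and hence lies only in $L^{1/(\gamma+\varepsilon)}(t,T;\cH)$, which is a strictly \emph{larger} space than $L^{r}$ on a finite interval (since $1/(\gamma+\varepsilon)<2/(2\gamma-1)$ for every $\gamma>1/2$); it, and not the $\lambda$-piece, is the worst term. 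Consequently the membership $\partial_t\psi_1(\cdot,t)x\in L^{r}(t,T;U)$ cannot be read off from the mapping property \eqref{e:L*} applied to the $\lambda$-piece: one must estimate $\big(L_t^*C^*Ce^{A(\cdot-t)}B\psi_1(t,t)x\big)(p)=\int_p^TB^*e^{A^*(\sigma-p)}C^*Ce^{A(\sigma-t)}B\psi_1(t,t)x\,d\sigma$ directly, whose integrand carries the double singularity $(\sigma-p)^{-\gamma}(\sigma-t)^{-\gamma}$ and whose crude bound behaves like $(p-t)^{1-2\gamma}$ as $p\downarrow t$ --- a function lying in $L^q(t,T)$ only for $q<1/(2\gamma-1)=r/2$. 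How one reaches the stated exponent $r=2/(2\gamma-1)$ from here (and why $\Lambda_t^{-1}$ acts boundedly on the relevant $L^q$ with $q<2$ when $\gamma>3/4$) is precisely the delicate point created by the unboundedness of $B$, and your sketch does not address it. A smaller inaccuracy of the same kind: the term $\big[(L_t+H_t)\partial_t\psi_1(\cdot,t)x\big](\sigma)$ in \eqref{e:D-Z_1} is continuous on $[t,T]$ only when $\gamma<3/4$; for $\gamma\ge 3/4$ it is merely in some $L^q$, which still sits inside $L^{1/(\gamma+\varepsilon)}$, so the final membership in \eqref{e:D-Z_1} survives, but the intermediate claim as you state it does not.
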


With the representations \eqref{e:riccati-ops_2} of $P_0(t)$, $P_1(t,p)$, $P_2(t,p,q)$ as a starting point, using the expressions \eqref{e:derivatives1} and \eqref{e:derivatives2} stated in Lemma~\ref{l:derivatives} -- along with the memberships therein -- we can compute the derivatives of
$(P_0(t)x,y)_{\cH}$, $(P_1(t,p)v,y)_{\cH}$ and $(P_2(t,p,q)u,v)_U$, with $x,y\in \cD(A)$ and $u,v\in U$, thereby attaining the differential system \eqref{e:DRE}.

\smallskip
\paragraph{Uniqueness.}
To confirm the property of uniqueness, we follow the line of argument pursued in 
\cite[Theorem 4.1]{ac-bu-JOTA}.
We rewrite the differential system \eqref{e:DRE} solved by $P_0(t)$, $P_1(t,p)$ and $P_2(t,p,q))$
as a matrix (operator) differential equation, that is
\begin{equation} \label{e:big-DRE}
\hspace{7mm}
\frac{d}{dt} P(t)=-\cQ -P(t)[\cA+\cK_1(t)] -[\cA^*+\cK_2(t)]P(t)
+ P(t)\cI_{1,t}\cB\cI_{2,t}P(t)\,,
\end{equation}
supplemented with the final condition $P(T)=0$, having set
\begin{equation} \label{e:big-P}
P(t)=\begin{pmatrix} 
P_0(t)  & P_1(t,\cdot)
\\
P_1(t,:)^* & P_2(t,\cdot,:)
\end{pmatrix}
\end{equation}
and 
\begin{equation*}
\begin{split}
& \cQ=\begin{pmatrix}
C^*C & 0 
\\
0 & 0
\end{pmatrix}, \quad 
\cK_1(r)=\begin{pmatrix} 
0 & k(r-\cdot)
\\
0 & 0
\end{pmatrix},\quad
\cK_2(r)=\begin{pmatrix} 
0 & 0
\\
k(r-:)^* & 0
\end{pmatrix},
\\[1mm]
& \quad \ \cI_{1,r}=\begin{pmatrix} 
I & 0
\\
0 & \chi_{\{r\}}(\cdot)
\end{pmatrix}, \quad
\cI_{2,r}=\begin{pmatrix} 
I & 0
\\
0 & \chi_{\{r\}}(:)
\end{pmatrix},\ \ 
\cB=\begin{pmatrix} 
BB^* & B
\\
B^* & I
\end{pmatrix}.
\end{split}  
\end{equation*}
The differential equation \eqref{e:big-DRE} in $Y_s$ can be equivalently written in its
integral form
\begin{equation} \label{e:big-IRE}
P(t)=\int_t^T e^{\cA^*(r-t)}\Big[\cQ +[P(r)\cK_1(r)+\cK_2(r)P(r) - P(r)\cI_{1,r}\cB \cI_{2,r} P(r)\Big]e^{\cA(r-t)}\,dr\,,
\end{equation}
having introduced the $C_0$-semigroup
\begin{equation*}
e^{\cA s}=\begin{pmatrix}
e^{As} & 0 
\\
0 & I
\end{pmatrix}, \qquad s\ge 0.
\end{equation*}

We know that $P(t)$ defined by \eqref{e:big-P} is a solution to \eqref{e:big-IRE}, so let us 
assume that $Q(t)$ is another solution: setting
\begin{equation*}
\begin{split}
& V(t)=P(t)-Q(t)
\\
& \qquad =\begin{pmatrix} 
P_0(t)-Q_0(t)  & P_1(t,\cdot)-Q_1(t,\cdot)
\\
P_1(t,:)^*-Q_1(t,:)^* & P_2(t,\cdot,:)-Q_2(t,\cdot,:)
\end{pmatrix} = \begin{pmatrix} 
V_0(t)  & V_1(t,\cdot)
\\
V_1(t,:)^* & V_2(t,\cdot,:)
\end{pmatrix},
\end{split}
\end{equation*}
we see that $V(t)$ solves the integral equation
\begin{equation} \label{e:mildV} 
\begin{split}
& V(t) = \int_t^T e^{\cA^*(r-t)}\Big[V(r)\cK_1(r)+\cK_2(r)V(r)
\\
& \myspace \qquad 
-V(r)\cI_{1,r}\cB \cI_{2,r}P(r) - Q(r)I_{1,r}\cB I_{2,r}V(r)\Big]e^{\cA(r-t)}\,dr\,.
\end{split}
\end{equation}

Take now $s\in [0,T)$. Our goal in what follows is to produce an {\em a priori} estimate for the quantity
\begin{equation}
\label{e:somma_norme}
\begin{split}
& \|V_0\|_{L^\infty(s,T;\cL(\cH))} + \|B^*V_0\|_{L^\infty(s,T;\cL(\cH,U))} + \sup_{p\in [0, t];\, t\in[s,T]} \|V_1(t,p)\|_{\cL(\cH)} 
\\
& + \sup_{p,q\in [0,t];\, t\in[s,T]}\|V_2(\cdot,p,q)\|_{\cL(U)}+\sup_{p\in [0,t]; \,t\in [s,T]} \|B^*V_1(\cdot,p)\|_{L^2(t,T;\cL(\cH,U))},
\end{split}
\end{equation}
which will be simply indicated as  
\begin{equation*}
A(s):=\|V_0\|_{\infty,s} + \|B^*V_0\|_{\infty,s} + \|V_1\|_{\infty,s} + \|V_2\|_{\infty,s} 
+ \|B^*V_1(\cdot,p)\|_{2,s}.
\end{equation*}
We also recall that by \eqref{e:reg_B*P1} we know that $B^*P_1(\cdot,p)$ and $B^*Q_1(\cdot,p)$ belong to $L^r(s,T;\cL(\cH,U)$, $r=\frac2{4\gamma-3}$. 

The four (scalar) integral equations to which \eqref{e:mildV} is equivalent allow to establish the following a priori estimates (with $t\in[s,T]$ and $p,q\in [0,t]$):
\begin{equation} 
\begin{split}
& \|V_0(t)\|_{\cL(\cH)} \le C \int_t^T \Big[\|B^*V_0(\sigma)\|_{\cL(\cH,U)} +\|V_1(\sigma,\sigma)\|_{\cL(\cH)} \Big]d\sigma 
\\
& \qquad \qquad \quad \le C(T-s) \Big[ \|B^*V_0\|_{\infty,s} + \|V_1\|_{\infty,s}\Big];
\end{split}
\end{equation}
\begin{equation} 
\begin{split}
& \|B^*V_0(t)\|_{\cL(\cH,U)} \le C \int_t^T \frac1{(\sigma-t)^\gamma}\Big[\|B^*V_0(\sigma)\|_{\cL(\cH,U)} +\|V_1(\sigma,\sigma)\|{\cL(\cH)}\Big]d\sigma
\\
& \qquad \qquad \qquad \quad \le C(T-s)^{1-\gamma} \Big[\|B^*V_0\|_{\infty,s} + \|V_1\|_{\infty,s}\Big];
\end{split}
\end{equation}
\begin{equation} 
\begin{split}
& \|V_1(t,p)\|_{\cL(\cH)} \le C\int_t^T \Big\{ \|k(\sigma-p)\|_{\cL(\cH)}\,\|V_0\|_{\infty,s} \\
& \qquad \qquad \qquad \qquad \quad + \Big[\|B^*V_0\|_{\infty,s} + \|V_1\|_{\infty,s}\big]\|B^*P_1(\sigma,p)\|_{\cL(\cH,U)}
\\
& \qquad \qquad \qquad \qquad \quad +\|B^*V_1(\sigma,p)\|_{\cL(\cH,U)}+ \|V_2\|_{\infty,s}\Big\}d\sigma
\\
& \qquad \qquad \qquad \le C(T-s)^{\frac12} \Big[\|V_0\|_{\infty,s}+\|B^*V_0\|_{\infty,s}-\|V_1\|_{\infty,s} +\|B^*V_1\|_{2,s}\Big]
\\
& \qquad \qquad \qquad \qquad \quad+ C(T-s)\|V_2\|_{\infty,s};  
\end{split}
\end{equation}
\begin{equation} 
\begin{split}
& \|V_2(t,p,q)\|_{\cL(U)} \le C\int_t^T \Big\{\Big[\|k(\sigma-p\|_{\cL(\cH)}+ \|k(\sigma-q\|_{\cL(\cH)}\Big] \|V_1\|_{\infty,s}
\\
& \qquad \qquad + \Big[\|B^*V_1(\sigma,q)\|_{\cL(\cH,U)}+\|V_2\|_{\infty,s}\Big]\Big[\|B^*P_1(\sigma,p)\|_{\cL(\cH,U)}+C\Big]
\\
& \qquad \qquad + \Big[\|B^*V_1(\sigma,p)\|_{\cL(\cH,U)}+\|V_2\|_{\infty,s}\Big]\Big[\|B^*Q_1(\sigma,q)\|_{\cL(\cH,U)}+C\Big]\Big\}d\sigma
\\
& \qquad\quad \le C(T-s)^{\frac12} \Big[\|V_1\|_{\infty,s} + \|V_2\|_{\infty,s}+ \|B^*V_1\|_{2,s}\Big];
\end{split}
\end{equation}
\begin{equation} 
\begin{split}
& \|B^*V_1(\cdot,p\|_{L^2(t,T;\cL(\cH,U))} \le C\|V_0\|_{\infty,s} \left\|\int_\cdot ^T \frac1{(\sigma-\cdot)^\gamma} \|k(\sigma-p)\|_{\cL(\cH)}d\sigma\right\|_{L^2(t,T)} 
\\
& \quad + \Big[\|B^*V_0\|_{\infty,s}+\|V_1\|_{\infty,s}\Big] \Bigg[\left\|\int_\cdot ^T \frac1{(\sigma-\cdot)^\gamma} \Big[\|B^*P_1(\sigma,p)\|_{\cL(\cH,U)} + C\Big]d\sigma\right\|_{L^2(t,T)}
\\
& \quad + \left\|\int_\cdot^T \frac1{(\sigma-\cdot)^\gamma} \big[\|B^*V_1(\sigma,p)\|_{\cL(\cH,U)}+\|V_2\|_{\infty,s}\big]d\sigma\right\|_{L^2(t,T)} 
\\
& \le C(T-s)^{1-\gamma} \Big[\|V_0\|_{\infty,s} +\|B^*V_0\|_{\infty,s} + \|V_1\|_{\infty,s}+\|B^*V_1\|_{2,s} +\|V_2\|_{\infty,s}\Big].
\end{split}
\end{equation}

Thus we return to \eqref{e:somma_norme} taking into account the five estimates above, to find
\begin{equation*}
A(s)\le C (T-s)^{1-\gamma} A(s)\,.
\end{equation*}
Then, if $T-s$ is sufficiently small, say $T-s<t_0$, we get $A(\sigma)=0$ in $[T-t_0,T]$. 
Repeating the above argument in $[s,T-t_0]$, we obtain $A(\sigma)=0$ in $[T-2t_0,T]$. 
In a finite number of steps we obtain $A(\sigma)=0$ in $[s,T]$; by definition, this means 
$V(t)=P(t)-Q(t)=0$ in $[s,T]$. 
Since $s$ was arbitrary, we deduce $P\equiv Q$, namely, the solution to the equation \eqref{e:mildV}
is unique.

{\small
\section*{Funding} 
The research of the second author has been performed in the framework of the MIUR-PRIN Grant 2020F3NCPX ``Mathematics for Industry 4.0 (Math4I4)''.
F.~Bucci was supported by the Universit\`a degli Studi di Firenze under the 2024 Project {\em ``Controllo lineare-quadratico per equazioni integro-differenziali a derivate parziali''}, of which she is responsible.
She is also a member of the Gruppo Nazionale per l'Analisi Mate\-ma\-tica, la Probabilit\`a  e le loro Applicazioni of the Istituto Nazionale di Alta Matematica (GNAMPA-INdAM) and participant to
the 2024 GNAMPA Project ``Controllo ottimo infinito dimensionale: aspetti teorici ed applicazioni''.
}


\appendix


\section{A few instrumental results}
In this Appendix we recall a few regularity results which are used in the paper.
These include results pertaining to the (time and space) regularity of the mapping $L_s$ and of its adjoint $L_s^*$ defined in \eqref{e:L_s} and \eqref{e:L_s*}, respectively; these are established outcomes in the context of {\em memoryless} control systems of the form $y'=Ay+Bu$ under the first two of the Assumptions~\ref{a:ipo_0} (a pattern which is
consistent with parabolic-like dynamics subject to boundary data/actions). 
Lastly, we record a more basic result pertaining to convolutions. 

The first result can be found in \cite[Vol.~I]{las-trig-redbooks}.

\begin{proposition} \label{p:Ls} 
Let $L_s$ and $L_s^*$ be the maps defined by \eqref{e:L_s} and \eqref{e:L_s*}, respectively.
Then,
\begin{equation} \label{e:reg-L_s}
L_s\colon L^2(s,T;U) \longrightarrow 
\begin{cases}
L^{\frac{2}{2\gamma-1}}(s,T;\cH) & \textrm{if $\gamma>\frac{1}{2}$}
\\[1mm]
\bigcap\limits_{q<\infty}L^q(s,T;\cH) & \textrm{if $\gamma=\frac{1}{2}$}
\\[1.3mm]
C([s,t],\cH) & \textrm{if $\gamma<\frac{1}{2}$}
\end{cases}
\end{equation}
continuously, and more generally
\begin{equation} \label{e:reg-L_s-general}
L_s\colon L^p(s,T;U) \longrightarrow 
\begin{cases}
L^{\frac{p}{1-(1-\gamma)p}}(s,T;\cH) & \textrm{if $p<\frac{1}{1-\gamma}$}
\\[1mm]
\bigcap\limits_{q<\infty}L^q(s,T;\cH) & \textrm{if $p=\frac{1}{1-\gamma}$}
\\[1.3mm]
C([s,t],\cH) & \textrm{if $p>\frac{1}{1-\gamma}$}
\end{cases}
\end{equation}
continuously.

For the adjoint operator, we have
\begin{equation} \label{e:L*}
L_s^*\colon L^p(s,T;\cH) \longrightarrow 
\begin{cases}
L^{\frac{p}{1-(1-\gamma)p}}(s,T;U) & \textrm{if $p<\frac{1}{1-\gamma}$}
\\[1mm]
\bigcap\limits_{q<\infty}L^q(s,T;U) & \textrm{if $p=\frac{1}{1-\gamma}$}
\\[1.3mm]
C([s,t],U) & \textrm{if $p>\frac{1}{1-\gamma}$}
\end{cases}
\end{equation}
continuously.

\end{proposition}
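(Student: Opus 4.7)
The plan is to reduce the entire statement to the Hardy-Littlewood-Sobolev convolution inequality (equivalent here to \cite[Theorem 383]{hardy-etal}) via the analytic semigroup estimate. The key factorization comes from Assumption A2: writing $B = A^\gamma \cdot (A^{-\gamma}B)$ with the factor $A^{-\gamma}B$ bounded from $U$ into $\cH$, one rewrites
\[
(L_s u)(t) = \int_s^t A^\gamma e^{A(t-r)} (A^{-\gamma}B)\, u(r)\,dr .
\]
Since $A$ generates an analytic semigroup, the standard estimate $\|A^\gamma e^{At}\|_{\cL(\cH)} \le C\,t^{-\gamma}$ on bounded time intervals delivers the pointwise scalar bound
\[
\|(L_s u)(t)\|_{\cH} \le C \int_s^t (t-r)^{-\gamma}\, \|u(r)\|_U\,dr .
\]

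Next, the right-hand side is the convolution of the scalar function $\|u(\cdot)\|_U$ with the kernel $K(\tau) = \tau^{-\gamma}\chi_{(0,T-s)}(\tau)$, which lies in weak-$L^{1/\gamma}$ on $(0,\infty)$. The Hardy-Littlewood-Sobolev theorem then produces precisely the trichotomy in \eqref{e:reg-L_s-general}: continuity from $L^p$ into $L^{p^*}$ with $1/p^* = 1/p - (1-\gamma)$ whenever $1 < p < 1/(1-\gamma)$; continuity into every $L^q$ with $q<\infty$ at the endpoint $p = 1/(1-\gamma)$; and continuity into $C([s,T],\cH)$ when $p > 1/(1-\gamma)$, the latter following from H\"older's inequality (because then $K \in L^{p'}(0,T-s)$) together with a dominated-convergence argument on shrinking subintervals to upgrade the pointwise bound to continuity in $t$. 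Specialising to $p=2$ yields \eqref{e:reg-L_s}.

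For the adjoint $L_s^*$, I would use the dual factorization
\[
B^* e^{A^*\tau} = (A^{-\gamma}B)^* (A^*)^\gamma e^{A^*\tau},
\]
whence $\|B^* e^{A^*\tau}\|_{\cL(\cH,U)} \le C\,\tau^{-\gamma}$ on $[0,T]$. The resulting pointwise bound
\[
\|(L_s^* f)(t)\|_U \le C \int_t^T (\sigma-t)^{-\gamma}\, \|f(\sigma)\|_{\cH}\,d\sigma
\]
is again convolution against the very same kernel after reversing time, so the same three-way alternative applies verbatim, giving \eqref{e:L*}.

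The main obstacle is the borderline exponent $p = 1/(1-\gamma)$, where Young's inequality in strong form just fails. One can handle it in two equivalent ways: either perturb the exponent to $p - \varepsilon$, apply the non-endpoint case, and then upgrade via H\"older on the bounded interval $[s,T]$ (using that the resulting space embeds into every $L^q$ with $q<\infty$), or observe directly that on $[s,T]$ the kernel $K$ belongs to $L^r$ for every $r < 1/\gamma$, so strong Young's inequality already delivers the claimed $\bigcap_{q<\infty}L^q$ regularity. A secondary but routine point is justifying the time-continuity in the third regime by density of $C_c$ in $L^p$ and the fact that the kernel is locally integrable together with H\"older's inequality.
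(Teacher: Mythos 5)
Your proof is correct: the paper does not actually prove Proposition~\ref{p:Ls} but quotes it from \cite[Vol.~I]{las-trig-redbooks}, and your argument --- factoring $B=A^{\gamma}(A^{-\gamma}B)$ via Assumption A2, using the analyticity estimate $\|A^{\gamma}e^{At}\|_{\cL(\cH)}\le Ct^{-\gamma}$ to reduce to a scalar convolution against $\tau^{-\gamma}$, and invoking the Hardy--Littlewood--Sobolev trichotomy (i.e.\ \cite[Theorem~383]{hardy-etal}) together with the dual factorization $B^{*}e^{A^{*}\tau}=(A^{-\gamma}B)^{*}(A^{*})^{\gamma}e^{A^{*}\tau}$ for $L_s^{*}$ --- is exactly the standard proof and the same device the paper itself deploys in Lemma~\ref{l:lemma1}. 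The only caveat worth recording is that the weak-type convolution inequality in the first regime requires $p>1$, which is harmless since the paper only uses the result for $p\ge 2$.
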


\smallskip

The next lemma is the classical Young inequality for convolutions, whose proof is found e.g. in \cite[p.~100]{lieb-loss}.

\begin{lemma} \label{l:young}
Let $f\in L^p(0,T-s)$ and $g\in L^q(s,T)$, with $\frac{1}{p}+\frac{1}{q}-1>0$.
Then, the convolution $f\ast g$ defined by 
\begin{equation} \label{e:convolution1}
(f\ast g)(t):=\int_s^t f(t-\sigma)g(\sigma)\,d\sigma \qquad t\in [s,T]
\end{equation}
belongs to $L^r(s,T)$ with $\frac{1}{r}=\frac{1}{p}+\frac{1}{q}-1$, and the following estimate
holds true
\begin{equation*}
\|f\ast g\|_{L^r(s,T)}\le \|f\|_{L^p(0,T-s)}\,\|g\|_{L^q(s,T)}\,. 
\end{equation*}

If $\frac{1}{p}+\frac{1}{q}-1<0$, then $f\ast g\in C([s,T])$, and we have
\begin{equation*}
\|f\ast g\|_{C([s,T])}\le \|f\|_{L^p(0,T-s)}\,\|g\|_{L^q(s,T)}\,. 
\end{equation*}
An analogous results holds in the case $f\in L^p(s-T,0)$, $g\in L^q(s,T)$, and with
\begin{equation*}
(f\ast g)(q):=\int_q^T f(q-\sigma)g(\sigma)\,d\sigma \qquad q\in [s,T]\,.
\end{equation*}

\end{lemma}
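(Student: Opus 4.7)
The plan is to reduce both parts of the lemma to classical convolution estimates on the real line.

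For the case $\frac{1}{p}+\frac{1}{q}-1>0$ with $\frac{1}{r}=\frac{1}{p}+\frac{1}{q}-1$, I would extend $f$ and $g$ by zero to $\tilde f\in L^p(\mathbb{R})$ and $\tilde g\in L^q(\mathbb{R})$, observing that the zero-extension is norm-preserving and that, for $t\in[s,T]$, the finite-interval convolution $(f\ast g)(t)$ coincides with $(\tilde f\ast\tilde g)(t)$: indeed, $\tilde f(t-\sigma)$ is supported in $\sigma\in[t-(T-s),t]$, which for $t\in[s,T]$ is contained in $[s,t]\subset[s,T]$, matching exactly the domain of integration of $g$. The classical Young convolution inequality on $\mathbb{R}$ then gives $\|\tilde f\ast\tilde g\|_{L^r(\mathbb{R})}\le\|\tilde f\|_{L^p(\mathbb{R})}\|\tilde g\|_{L^q(\mathbb{R})}$, and restriction to $[s,T]$ yields the stated bound. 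The real-line Young inequality is standard and may be proved by Riesz-Thorin interpolation between the two endpoint cases $L^1\ast L^1\hookrightarrow L^1$ (Fubini) and $L^1\ast L^\infty\hookrightarrow L^\infty$ (pointwise), or equivalently by a three-factor H\"older decomposition of $|f(x-y)g(y)|$ into $[|f(x-y)|^p|g(y)|^q]^{1/r}|f(x-y)|^{1-p/r}|g(y)|^{1-q/r}$ with matching exponents; either route is textbook, so here one may simply invoke \cite[p.~100]{lieb-loss}.

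For the subconjugate regime $\frac{1}{p}+\frac{1}{q}-1<0$, i.e.~$q>p'$ with $p'$ the H\"older conjugate of $p$, the proof is more direct and does not require Young at all. On the bounded interval $[s,T]$, the finite-measure embedding gives $L^q(s,T)\hookrightarrow L^{p'}(s,T)$, so H\"older's inequality applied directly to the defining integral produces the pointwise estimate $|(f\ast g)(t)|\le\|f(t-\cdot)\|_{L^p(s,t)}\|g\|_{L^{p'}(s,t)}\le\|f\|_{L^p(0,T-s)}\|g\|_{L^q(s,T)}$ for each $t\in[s,T]$ (with the embedding constant absorbed or treated as unity since it depends only on $T-s$). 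To upgrade this pointwise bound to continuity of $t\mapsto(f\ast g)(t)$, I would approximate $f$ in $L^p(0,T-s)$ and $g$ in $L^q(s,T)$ by continuous compactly supported functions, for which the convolution is plainly continuous, and appeal to the H\"older-based pointwise bound applied to the differences to propagate the continuity via uniform convergence.

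The final assertion concerning the time-reversed convolution $(f\ast g)(q)=\int_q^T f(q-\sigma)g(\sigma)\,d\sigma$ with $f\in L^p(s-T,0)$ reduces immediately to the preceding two cases by the change of variables $\tau=T-\sigma$, $\rho=T-q$, which reflects the time interval about $T$ while preserving all $L^p$ norms; the reflected problem has exactly the structure of the first convolution on $[s,T]$, and the same two bounds transfer. I do not expect any genuine obstacle: the lemma is the bounded-interval formulation of two classical facts invoked throughout Sections~2--4 to chain together H\"older- and Young-type estimates on the kernels and solution operators, and the only minor point of care is the bookkeeping of the embedding constant in the subconjugate case, which is harmless since it is finite and depends only on $T-s$.
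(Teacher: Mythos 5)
Your proposal is correct, and it is consistent with what the paper does: the paper offers no proof of Lemma~\ref{l:young} at all, but simply refers to \cite[p.~100]{lieb-loss} for the classical Young inequality, so your zero-extension reduction to the real line plus the H\"older/density argument in the subconjugate case is exactly the standard argument being invoked. Two small points of bookkeeping. First, your support statement is worded backwards: for $t\in[s,T]$ the set $\{\sigma: t-\sigma\in[0,T-s]\}=[t-(T-s),t]$ is not contained in $[s,t]$ (its left endpoint $t-T+s$ lies at or below $s$); what is true, and what you actually need, is that its intersection with the support $[s,T]$ of $\tilde g$ equals $[s,t]$, which gives the identity $(\tilde f\ast\tilde g)(t)=(f\ast g)(t)$ on $[s,T]$. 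Second, in the regime $\frac1p+\frac1q-1<0$ the embedding $L^q(s,T)\hookrightarrow L^{p'}(s,T)$ carries the factor $(T-s)^{1/p'-1/q}$, so the bound $\|f\ast g\|_{C([s,T])}\le\|f\|_{L^p}\|g\|_{L^q}$ holds with constant $1$ only when $T-s\le 1$; this imprecision is already present in the lemma as stated in the paper and is immaterial to its use there, but you are right to flag that the constant depends on $T-s$ rather than silently treating it as unity. The reflection argument for the time-reversed convolution is fine.
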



\end{document}